\providecommand{\CC}{{\mathbb{C}}}
\providecommand{\RR}{{\mathbb{R}}}
\providecommand{\QQ}{{\mathbb{Q}}}
\providecommand{\ZZ}{{\mathbb{Z}}}
\providecommand{\EE}{{\mathcal E}}
\providecommand{\HH}{{\mathcal H}}
\providecommand{\KK}{{\mathcal K}}
\providecommand{\LL}{{\mathcal L}}
\providecommand{\UU}{{\mathcal U}}
\newcommand{\ang}[1]{\langle #1 \rangle} 
\providecommand{\THX}{{\mathbb{T}_H X}}
\providecommand{\TX}{{\mathbb{T}X}}
\providecommand{\Lg}{{\mathfrak g}}
\newtheorem{theorem}{Theorem}[subsection]
\newtheorem{lemma}[theorem]{Lemma}
\newtheorem{corollary}[theorem]{Corollary}
\newtheorem{proposition}[theorem]{Proposition}
\theoremstyle{definition}
\newtheorem{definition}[theorem]{Definition}
\theoremstyle{remark} 
\newtheorem{remark}[theorem]{Remark}
\newtheorem{example}[theorem]{Example}
\numberwithin{equation}{section}
\begin{document}

\title{$K$-homology and index theory on contact manifolds}
\author{Paul F.\ Baum}
\address{The Pennsylvania State University, University Park, PA, 16802, USA}
\email{baum@math.psu.edu}
\author{Erik van Erp}
\address{Dartmouth College, 6188, Kemeny Hall, Hanover, New Hampshire, 03755, USA}
\curraddr{University of Hawaii at Manoa, 2565 McCarthy Mall, Honolulu, Hawaii 96822, USA}
\email{jhamvanerp@gmail.com}

\thanks{Paul Baum thanks Dartmouth College for the generous hospitality provided to him via the Edward Shapiro fund.
Erik van Erp thanks Penn State University for a number of productive and enjoyable visits.}
\thanks{PFB was partially supported by NSF grant DMS-0701184}
\thanks{EvE was partially supported by NSF grant DMS-1100570}


\maketitle

\tableofcontents

\section{Introduction}

This paper applies $K$-homology to solve the index problem for a class of hypoelliptic (but not elliptic) operators on contact manifolds.
$K$-homology is the dual theory to $K$-theory.
We explicitly calculate the $K$-cycle (i.e., the element in geometric $K$-homology) determined by any hypoelliptic Fredholm operator in the Heisenberg calculus.

The index theorem of this paper precisely indicates how the analytical versus geometrical $K$-homology setting provides an effective framework for extending formulas of Atiyah-Singer type to non-elliptic Fredholm operators.
Given an index problem, the $K$-homology framework provides a guide and hint as to what the solution of that index problem might be.

Let $P$ a differential operator on a closed contact manifold $X$ of dimension $2n+1$,
\[ P\;\colon\; C^\infty(X,F^0)\to C^\infty(X, F^1)\]
$P$ is {\em Heisenberg-elliptic} if its principal symbol in the Heisenberg calculus is invertible.
Any Heisenberg-elliptic $P$ is Fredholm and hypoelliptic, but not elliptic.
Invertibility of the principal Heisenberg symbol does not imply invertibility of the usual principal symbol used by Atiyah and Singer.
However, the analytic properties of a Heisenberg-elliptic $P$ are such that $P$ determines an element in Kasparov (i.e., analytic) $K$-homology
\[ [P]\in KK^0(C(X),\CC)\]
The main theorem of this paper is a topological formula for this analytical $K$-homology element $[P]$.
Our formula is
\vskip 6pt
\noindent {\bf Theorem.}
\begin{equation}\label{MainFormula}
[P]\;= \; [\sigma^+_H(P)]\;\cap\; [X^+]+\;[\sigma^-_H(P)]\;\cap\; [X^-]
\end{equation}
\vskip 6pt
Here  $[X^+]$ and $[X^-]$ are the fundamental cycles in the $K$-homology group $K_1(X)$ given by the two natural Spin$^c$ structures of $X$, one for each co-orientation of the contact structure.
In \eqref{MainFormula}, $\sigma^+_H(P)$ and $\sigma^-_H(P)$ are elements in the $K$-theory group $K^1(X)$ extracted from the {\em principal Heisenberg symbol} of $P$. 
More specifically, $\sigma^+_H(P)$ and $\sigma^-_H(P)$ are explicit automorphisms of two $\CC$ vector bundles on $X$ intrinsically associated to the contact structure of $X$.
The cap product $\cap$ in \eqref{MainFormula} is the $K^\bullet(X)$-module structure of $K_\bullet(X)$.

Applying the Chern character to \eqref{MainFormula} gives a characteristic class formula in rational homology $H_\bullet(X,\QQ)$ for the index of $P$.
Our proof of \eqref{MainFormula} is sufficiently canonical so that the same formula is valid for the equivariant index problem and the families index problem for the Heisenberg calculus. 

The index problem for Heisenberg-elliptic operators on contact manifolds has been studied, and special cases were solved by 
C.\ Epstein and R.\ Melrose \citelist{\cite{Me97} \cite{Ep04} \cite{Epxx}}, and E.\ van Erp  \citelist{\cite{vE10a} \cite{vE10b}}.
See also the related work of A.\ Connes and H.\ Moscovici for foliated manifolds \citelist{\cite{CM95} \cite{CM98}}.
Our work builds on these partial results. From our point of view equation \eqref{MainFormula} solves the index problem (including the equivariant and families problems) for Heisenberg-elliptic operators on contact manifolds.

\subsection*{Acknowledgment}
We thank A.\ Connes, R.\ G.\ Douglas, C.\ Epstein, E.\ Getzler, A.\ Gorokhovsky, J.\ Kohn, N.\ Higson, R.\ Melrose, R.\ Nest and M.\ Taylor for enlightening comments and discussions.

\section{The problem and the solution}

\subsection{$K$-homology}
The point of view of \cite{BD80} is that index theory, in general, is based on the equivalence between geometric $K$-homology and analytic $K$-homology.
For a topological space $X$, a geometric $K$-cycle is a triple $(M,E,\varphi)$
consisting of a closed Spin$^c$ manifold $M$,
a complex vector bundle $E$ on $M$,
and a continuous map $\varphi\,\colon M\to X$.
The collection of $(M, E, \varphi)$-cycles, subject to a certain equivalence relation, forms an abelian group under disjoint union.
We denote this group by $K^{top}_\ast(X)$.

There is a natural map
\[ \mu\;\colon\; K_j^{top}(X) \to KK^j(C(X),\CC),\qquad j=0,1\]
from geometric $K$-homology to analytic $K$-homology, defined as follows.
Let $D_M\otimes I_E$ denote the Dirac operator $D_M$ for the Spin$^c$ manifold $M$
twisted by the vector bundle $E$.
Then $\mu(M,E,\varphi)$ is the push-forward by $\varphi$ of the analytic $K$-cycle $[D_M\otimes I_E]\in KK^j(C(M),\CC)$,
\[ \mu(M, E, \varphi) = \varphi_*(D_M\otimes I_E)\in KK^j(C(X),\CC).\]
For a finite $CW$-complex $X$ the map $\mu$ is an isomorphism \cite{BHS07}.

In \cite{BD80} Baum and Douglas ask
\begin{quote} {\it Let $X$ be a finite $CW$-complex. 
Given an element in analytic $K$-homology, \\$\xi \in KK^j(C(X),\CC)$, explicitly compute the unique $\widetilde{\xi}$ in geometric $K$-homology, $\widetilde{\xi}\in K^{top}_j(X)$, corresponding to $\xi$.}
\end{quote}
As explained in \cite{BD80}, several well-known index theorems (including, of course, the Atiyah-Singer index formula for elliptic operators) can be understood in this framework.
Moreover, if the construction of the $(M,E,\varphi)$-cycle for a given class of operators is sufficiently canonical,
it will also solve the equivariant and families index problems for this class of operators.

In this paper we solve the index problem for the Heisenberg calculus in this context.
In other words, we solve the following problem.
\begin{quote}
{\it Let $X$ be a closed contact manifold.
A Heisenberg-elliptic (pseudo)differential operator $P$ on $X$ determines an element $[P]$ in analytic $K$-homology $[P]\in KK^0(C(X),\CC)$.
Explicitly compute an $(M,E,\varphi)$-cycle in geometric $K$-homology $K_0^{top}(X)$
with $\mu(M,E,\varphi)=[P]$.}
\end{quote}
Starting with $[P]\in KK^0(C(X),\CC)$ the $K$-cycle $(M,E,\varphi)$ which solves the index problem for $P$ should be constructed so that {\it mutatis mutandis} the same $K$-cycle will solve the equivariant index problem (when a compact Lie group is acting) and the index problem for families.
As an immediate corollary of the computation of the $K$-cycle one obtains characteristic class formulas for the homology Chern character of $P$ (including for equivariant operators and families of operators). 
Thus, the computation of the $K$-cycle gives a fully satisfactory answer to the index problem for the Heisenberg calculus on contact manifolds.

\subsection{Contact manifolds}

\begin{definition}
 A {\em contact structure} on a manifold $X$ of dimension $2n+1$ is a $C^\infty$ sub-vectorbundle $H\subset TX$ of fiber dimension $2n$ such that if $\theta$ is any 1-form on $X$ with $H=\mathrm{Ker}\, \theta$ then the 2-form $d\theta$ restricted to $H$ is symplectic in each fiber $H_x$. 
  \end{definition}
In all that follows $X$ will denote a closed smooth contact manifold of dimension $2n+1$.
$H$ is referred to as the contact hyperplane bundle, while $\theta$ (which has to be chosen) is a contact form.
$\theta$ is a contact form on $X$ if and only if  $\theta(d\theta)^n$ is a nowhere vanishing $2n+1$-form (i.e., a volume form) on $X$.

Once a contact form $\theta$ has been fixed, a number of geometric structures on $X$ are then determined.
The {\em Reeb vector field} $T$ is the unique vector field on $X$ with the properties
\[ \theta(T)=1,\; d\theta(T,\,\cdot\,) = 0.\]
The Reeb field $T$ is tranversal to the contact hyperplane bundle $H$.

We choose a complex structure $J$ for $H$
that is compatible with the symplectic form $d\theta$ in each fiber, i.e.,
\[ J^2=-1,\;\; d\theta(Jv,Jw)=d\theta(v,w),\;\; d\theta(Jv,v)\ge 0.\]
Such a choice of $J$ is always possible, because the space of compatible complex structures on a symplectic vector space is contractible.
The choice of $J$ also defines a Euclidean structure in the fibers of $H$, with inner product defined by $\ang{v,w}=d\theta(Jv,w)$.
In addition, $J$ and $\theta$ determine a hermitian inner product,
\[ \ang{v,w} = d\theta (Jv,w) + i\, d\theta(v,w).\]  

\subsection{Example: A hypoelliptic (but not elliptic) Fredholm operator}

A {\em sublaplacian} for the given structure $(X, \theta, J)$
is a second order differential operator on $X$ constructed as follows.
Locally, a sublaplacian is of the form
\[ \Delta_H = \sum_{j=1}^{2n} -W_j^2.\]
Here $W_1, \ldots, W_{2n}$ is a collection of vector fields, defined in an open subset $U\subseteq X$, 
such that at each point $x\in U$
the vectors $W_j(x)$ form an orthonormal frame for the Euclidean vector space $H_x$.
A global sublaplacian on $X$ is then constructed via a partition of unity. 

To obtain an operator with interesting index theory we add a lower order term.
Let $r$ be a positive integer and let $\gamma$ be a $C^\infty$ map from $X$ to the $\CC$ vector space of all $r\times r$ matrices, denoted $M(r,\CC)$,
\[ \gamma\;\colon\; X\to M(r, \CC)\]
Examples of the kind of Fredholm operator to which our result applies are the second order differential operators $P_\gamma$ of the form
\[ P_\gamma = I_r\otimes \Delta_H + i\gamma(I_r\otimes T), \qquad i=\sqrt{-1}\]
\[ P_\gamma \;\colon\; C^\infty(X,\CC^r)\to C^\infty(X,\CC^r)\]
where (as usual) $I_r$ is the $r\times r$ identity matrix.
Operators of this type have been studied extensively.
See for example \cite[Ch. 1]{BG88} or \cite[Thm. 8.1]{Ep04} for the following result.
\begin{proposition}\label{gamma}
The operator $P_\gamma$ is invertible modulo smoothing operators if for all $x\in X$
\[\gamma(x) -\lambda I_r \;\;\text{\rm is invertible for all} \;\lambda \in \{\cdots\,, -n-4,-n-2,-n,n,n+2,n+4,\cdots\,\}\]
\end{proposition}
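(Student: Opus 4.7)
The plan is to reduce invertibility modulo smoothing operators to a pointwise invertibility statement about the principal Heisenberg symbol, and then to diagonalize that symbol via the representation theory of the Heisenberg group.

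First, I would invoke the fundamental criterion of the Heisenberg calculus from \cite{BG88}: a Heisenberg differential operator $P$ is invertible modulo smoothing operators on $X$ if and only if its principal Heisenberg symbol $\sigma_H(P)(x)$ is pointwise invertible for every $x\in X$. For fixed $x$, this symbol is a left-invariant differential operator on the osculating Heisenberg group $G_x\cong H_x\oplus\RR T_x$, and pointwise invertibility is detected by the Rockland condition: the image of $\sigma_H(P)(x)$ under every non-trivial irreducible unitary representation of $G_x$ must be injective on smooth vectors. For our operator,
\[ \sigma_H(P_\gamma)(x) \;=\; I_r\otimes \Delta_x \;+\; i\gamma(x)(I_r\otimes T_x), \]
where $\Delta_x$ is the sublaplacian on $G_x$ induced by the Euclidean structure on $H_x$, and $T_x$ is the central Reeb direction.

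Next, I would apply the Schr\"odinger representations $\pi_\lambda$ of $G_x$, $\lambda\in\RR\setminus\{0\}$, which (together with the characters trivial on $T_x$, and these do not enter the Rockland condition) exhaust the unitary dual of $G_x$. A standard computation yields $\pi_\lambda(\Delta_x)=|\lambda|\Lambda$, where $\Lambda$ is the $n$-dimensional harmonic oscillator on $L^2(\RR^n)$ with spectrum $\{2k+n : k\in\NN\}$ (with the usual Hermite multiplicities), and $\pi_\lambda(T_x)=i\lambda$. Since $\gamma(x)$ acts only on the $\CC^r$ factor and commutes with the Hermite spectral decomposition, the operator $\pi_\lambda(\sigma_H(P_\gamma)(x))$ is block-diagonal with blocks
\[ B_{k,\lambda,x} \;=\; |\lambda|(2k+n)\,I_r \;-\; \lambda\,\gamma(x). \]
For $\lambda>0$, invertibility of all these blocks reduces to $\gamma(x)-(2k+n)I_r$ being invertible for each $k\ge 0$; for $\lambda<0$, it reduces to $\gamma(x)+(2k+n)I_r$ being invertible. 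The union of forbidden eigenvalues is exactly the set $\{\ldots,-n-4,-n-2,-n,n,n+2,n+4,\ldots\}$ in the hypothesis. Uniformity across $x$, $\lambda$ and $k$, needed to promote the pointwise Rockland condition to genuine symbol invertibility, follows from the compactness of $X$ together with the growth $|\lambda|(2k+n)\to\infty$ which absorbs the bounded perturbation $\lambda\gamma(x)$ for $k$ large.

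The step I expect to be the main obstacle is the first one: the translation between invertibility modulo smoothing operators and the Rockland-type invertibility of the Heisenberg symbol. That translation is however the content of the parametrix construction in the Heisenberg calculus, which I would cite from \cite{BG88} and \cite{Ep04} rather than reprove. The substantive computational content of the proof is the spectral analysis of the symbol under the Schr\"odinger representations carried out in the preceding paragraph, and this is essentially a direct calculation once one has fixed conventions for the representations and the sublaplacian.
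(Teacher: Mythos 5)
Your proposal is correct and follows the same route as the sources the paper defers to (the paper gives no proof of this proposition, citing \cite{BG88} and \cite{Ep04}), and it matches the computation the paper itself carries out later in Section~\ref{final} when it diagonalizes $\sigma_H(P_\gamma)$. The only cosmetic difference is that you work in the Schr\"odinger model with the Hermite oscillator, whereas the paper uses the Bargmann--Fock model; these are unitarily equivalent and yield the same eigenvalues $\pm(2j+n)$, hence the same forbidden set for the spectrum of $\gamma(x)$.
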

The operators $P_\gamma$ are not elliptic, but they are hypoelliptic and Fredholm.
The parametrix (i.e., the inverse modulo smoothing operators) of these operators $P_\gamma$ is not a classical pseudodifferential operator, but it is a pseudodifferential operator in the Heisenberg calculus.
The invertibility requirement in Proposition \ref{gamma} is equivalent to invertibility of the principal Heisenberg symbol of $P_\gamma$.
We shall refer to hypoelliptic operators with a parametrix in the Heisenberg calculus as {\em Heisenberg-elliptic}.
Like $P_\gamma$, any Heisenberg-elliptic differential operator consists of a highest order part that only involves the $H$-directions, and is ``elliptic'' in the $H$-directions, plus terms of lower order.
For further details about Heisenberg-elliptic operators, see section \ref{AnalyticTriangle} below.

\subsection{The problem}
In general, we consider a Heisenberg-elliptic operator $P$, 
\[ P\;\colon\; C^\infty(X,F^0)\to C^\infty(X,F^1),\]
where $F^0, F^1$ are two smooth $\CC$ vector bundles on $X$.
Although not elliptic in the usual sense, $P$ has the same basic analytic properties as an elliptic operator.
For example, if $P$ is Heisenberg-elliptic then the symmetric operator
\[ D = \left( \begin{array}{cc} 0& P^*\\ P &0\end{array}\right)\;\colon\; C^\infty(X, F^0\oplus F^1)\to C^\infty(X, F^0\oplus F^1)\]
is essentially self-adjoint, has discrete spectrum, and finite dimensional eigenspaces consisting of smooth functions \cite[section 2.6]{vE10a}.
The bounded operator $F=D(1+D^2)^{-1/2}$ is Fredholm, and
commutators $[F, M_f]$ of $F$ with functions $f\in C(X)$ are compact. 
Therefore $F$ determines an element in the analytic $K$-homology of $X$,
\[ [P] \in KK^0(C(X),\CC).\]
We briefly verify this.
We use the standard convention that $\KK$ denotes the separable $C^*$-algebra of compact operators.

\begin{lemma}
The operator $F=D(1+D^2)^{-1/2}$ is a bounded Fredholm operator on $L^2(X)^{\oplus 2r}$ that 
satisfies the pseudolocality condition
$[F, M_f]\in \KK(L^2(X)^{\oplus 2r})$ for multiplication operators $M_f\in \LL(L^2(X)^{\oplus 2r}), f\in C(X)$. 
\end{lemma}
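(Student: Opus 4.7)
The plan has three parts: establish boundedness of $F$ via functional calculus, derive the Fredholm property from compactness of the resolvent of $D$, and handle pseudolocality by first reducing to smooth $f$ and then invoking the Heisenberg symbolic calculus.

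For boundedness and the Fredholm property, since $D$ is essentially self-adjoint by the result cited from \cite{vE10a}, the Borel functional calculus applied to the bounded continuous function $\phi(x)=x(1+x^2)^{-1/2}$ gives that $F=\phi(D)$ is bounded with $\|F\|\le 1$. A direct functional calculus computation produces the identity
\[
I-F^2 = (1+D^2)^{-1}.
\]
Since $D$ has discrete spectrum with finite-dimensional eigenspaces and eigenvalues tending to $\pm\infty$, the operator $(1+D^2)^{-1}$ is diagonalizable with eigenvalues tending to zero, hence compact. Thus $F$ is self-adjoint with $I-F^2$ compact, and Atkinson's theorem yields the Fredholm property.

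For pseudolocality, I would first reduce to the smooth case: the set $\{f\in C(X):[F,M_f]\in\KK(L^2(X)^{\oplus 2r})\}$ is norm-closed in $C(X)$, because $f\mapsto M_f$ is a contractive $\ast$-homomorphism and $\KK$ is norm-closed. Since $C^\infty(X)$ is dense in $C(X)$, it suffices to treat $f\in C^\infty(X)$. For such $f$ I would use the decomposition
\[
[F,M_f] = [D,M_f]\,(1+D^2)^{-1/2} + D\,[(1+D^2)^{-1/2},M_f]
\]
and show each summand is compact. In the Heisenberg calculus $[D,M_f]$ has order at most $1$ (one less than the Heisenberg order $2$ of $D$), so composed with $(1+D^2)^{-1/2}$ of Heisenberg order $-2$ the first term has Heisenberg order $-1$; a negative-order Heisenberg operator on the closed manifold $X$ is compact on $L^2$ via Rellich's theorem for Heisenberg Sobolev spaces. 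For the second term I would invoke the standard integral representation
\[
(1+D^2)^{-1/2}=\frac{1}{\pi}\int_0^\infty (s+1+D^2)^{-1}s^{-1/2}\,ds,
\]
apply the resolvent identity $[(s+1+D^2)^{-1},M_f]=-(s+1+D^2)^{-1}[D^2,M_f](s+1+D^2)^{-1}$ inside the integral, and verify norm-convergence along with compactness of the integrand by order-counting.

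The hard part will be rigorously placing $(1+D^2)^{-1/2}$ and its commutator with $M_f$ inside the Heisenberg calculus: this operator is not itself a polyhomogeneous Heisenberg pseudodifferential operator but rather a complex power of one, so the order estimates must be justified through the complex-power calculus developed for Heisenberg operators. Once this technical point is in place, the order-counting produces compactness of each piece and hence of $[F,M_f]$.
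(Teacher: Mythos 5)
Your proof is correct in outline but takes a genuinely different and more hands-on route than the paper. For boundedness and the Fredholm property, the paper simply asserts that $F$ is an order-zero Heisenberg operator and appeals to boundedness of order-zero operators and compactness of negative-order operators (the Heisenberg Rellich lemma); your argument via the functional calculus identity $I-F^2=(1+D^2)^{-1}$ and the discrete spectrum of $D$ is more elementary and avoids the symbol calculus entirely for this part, at the cost of using the self-adjointness and spectral-theoretic facts cited from \cite{vE10a}. For pseudolocality the divergence is larger: the paper works abstractly with the $C^*$-short exact sequence $0\to\KK\to\Psi_H\to S_H\to 0$, observes that $C(X)$ is central in $S_H$, and concludes $[F,M_f]\in\KK$ immediately from the vanishing of the symbol commutator --- no density reduction, no commutator decomposition, no integral representation. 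Your route, by contrast, reduces to smooth $f$, splits $[F,M_f]$ by a Leibniz-type identity, and does order-counting term by term. Both approaches ultimately rest on the same unproved technical input that you correctly flag: $(1+D^2)^{-1/2}$ (and hence $F$) is not literally a polyhomogeneous Heisenberg operator but lies in the norm closure of the order-zero calculus, which is justified by a complex-powers (or parametric-resolvent) construction for the Heisenberg calculus. The paper's assertion that ``$F$ has order zero in the Heisenberg pseudodifferential calculus'' is where it implicitly invokes this fact, so your proof is not weaker on this point --- you are simply more explicit about the gap. What the paper's method buys is brevity and the automatic handling of all $f\in C(X)$ without a density step; what yours buys is independence from the structure of the symbol $C^*$-algebra $S_H$ and a computation one can carry out by hand.
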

\begin{proof}
If $P$ is Heisenberg-elliptic then $F$ is Heisenberg-elliptic and has order zero in the Heisenberg pseudodifferential calculus.
Order zero operators in the Heisenberg calculus are bounded on $L^2(X)^{\oplus 2r}$ while operators of negative order are compact
(by the Rellich lemma for the Heisenberg calculus).
Therefore $F$ is bounded and invertible modulo compact operators.

We now verify the pseudolocality condition. 
Let $\Psi_H$ denote the  norm completion of the algebra of order zero Heisenberg operators.
The principal Heisenberg symbol $\sigma_H$  gives rise to a short exact sequence of $C^*$-algebras,
\[ 0\to\KK\to \Psi_H\stackrel{\sigma_H}{\to} S_H\to 0.\]
Without going into the details of the Heisenberg calculus, it suffices to know that the symbol algebra $S_H$ contains $C(X)$ as a central subalgebra.
Moreover, for a continuous function $f\in C(X)$ the multiplication operator $M_f$ on $L^2(X)$ is contained in $\Psi_H$, while its principal symbol is $\sigma_H(M_f)=f$, as usual.
It follows that $[\sigma_H(F), \sigma_H(M_f)]=0$ for any order zero operator $F$,
which is equivalent to $[F,M_f]\in \KK$.

\end{proof}

Our aim is to solve the index problem for Heisenberg-elliptic operators
within the $K$-homology framework formulated by Paul Baum and Ron Douglas \citelist{\cite{BD80} \cite{BD82}}, i.e., given a Heisenberg-elliptic operator $P$
to construct a $K$-cycle $(M,E,\varphi)$ with
\[ [P] = \mu(M,E,\varphi).\]

\subsection{The solution}
We now describe the solution of the index problem in $K$-homology for a Heisenberg-elliptic differential (or even pseudodifferential) operator $P$.
 
{\vskip 6pt \noindent \bf The equatorial symbol.}
Consider the {\em classical}  principal symbol of $P$,
\[ \sigma(P)\;\colon\;\pi^*F^0\to \pi^*F^1\]
The  symbol $\sigma(P)$ is a vector bundle homomorphism defined on the cosphere bundle $S(T^*X)$.
For an elliptic operator the classical principal symbol is (by definition) a vector bundle isomorphism.
For a Heisenberg-elliptic operator $\sigma(P)$ is not invertible on all of $S(T^*X)$.
\footnote{The only differential operators that are both elliptic and Heisenberg-elliptic are vector bundle isomorphisms.}

The restriction $\sigma(P)|S(H^*)$ of the principal symbol of $P$ to the equator $S(H^*)\subset S(T^*X)$ is referred to as the {\em equatorial symbol} of $P$.
Since a Heisenberg-elliptic differential operator $P$ is ``elliptic'' in the $H$ directions, 
its equatorial symbol $\sigma(P)|S(H^*)$ is invertible.
The topological information contained in the equatorial symbol is essentially trivial, and is certainly insufficient for calculating the index of $P$.
In fact, the equatorial symbol $\sigma(P)|S(H^*)$ of a Heisenberg-elliptic operator is homotopic to the pull-back to $S(H^*)$ of an isomorphism of the vector bundles $F^0, F^1$ on $X$,
\[\sigma_0\;\colon\; F^0\cong F^1\]
For example, the classical principal symbol of the operators $P_\gamma=I_r\otimes \Delta_H + i\gamma(I_r\otimes T)$ only depends on the sublaplacian $\Delta_H$, and the equatorial symbol of $P_\gamma$ is the constant map from $S(H^*)$ to the identity matrix $I_r$.

In order to state our result we fix an isomorphism $\sigma_0\colon F^0\cong F^1$ of vector bundles on $X$ determined by the equatorial symbol.
Having fixed $\sigma_0$,  the construction of the geometric $K$-cycle $[(M, E,\varphi)]=\mu^{-1}(P)$ is as follows.

{\vskip 6pt \noindent \bf The Spin$^c$ manifold $M$.}
An interesting  and unusual feature of the $K$-cycle determined by a Heisenberg-elliptic operator on a contact manifold is that it consists of {\em two} components.

As a smooth manifold, $M$ is a disjoint union of two copies of $X\times S^1$,
\[ M=X^+\times S^1 \sqcup X^-\times S^1\]
Here $X^+$ and $X^-$ denote two copies of the contact manifold $X$ with opposite co-orientations.
A {\em co-orientation} of the contact structure $H$ is an orientation of the normal bundle $TX/H$.
 Each co-orientation of $X$ gives rise to a symplectic structure for $X\times S^1$
(one the opposite of the other), and therefore a Spin$^c$ structure for $X\times S^1$.

{\vskip 6pt \noindent \bf The complex vector bundle $E$.}
The $\CC$ vector bundle $E$ on $M$ is constructed from the principal Heisenberg symbol $\sigma_H(P)$ of $P$.
$E$ consists, of course, of two vector bundles: one on each copy of $X\times S^1$.
Let $\theta$ be a contact form on $X$ that agrees with the co-orientation on $X^+$,
and let $J$ be a complex structure of $H$ compatible with $d\theta$.
We denote by $H^{1,0}$ the $\CC$ vector bundle on $X^+$ with underlying $\RR$ vector bundle $H$ on which the scalar $i\in \CC$ acts as $J$,
and $H^{0,1}$ is the conjugate bundle.
If we choose a co-orientation for the contact structure of $X$,
then the Bargmann-Fock representation of the Heisenberg group
realizes the principal Heisenberg symbol $\sigma_H(P)$ as an invertible map between two (infinite rank) $\CC$ vector bundles on $X$ 
\[  \sigma_H^+(P)\;\colon\; F^0\otimes V^+\to F^1\otimes V^+\]
Here $V^+$ denotes the bundle of Fock spaces
\[ V^+=\bigoplus_{j=0}^\infty {\rm Sym}^j H^{1,0}.\]
Reversing the co-orientation of $X$ determines a second vector bundle isomorphism,
\[  \sigma_H^-(P)\;\colon\; F^0\otimes V^-\to F^1\otimes V^-\]
where $V^-$ is the bundle of conjugate Fock spaces
\[ V^-=\bigoplus_{j=0}^\infty {\rm Sym}^j H^{0,1}.\]
Composing $\sigma_H^\pm(P)$ with the isomorphism $\sigma_0^{-1}\colon F^1\to F^0$ extracted from the equatorial symbol $\sigma(P)|S(H^*)$, we obtain vector bundle {\em automorphisms} of $F^0\otimes V^\pm$,
and therefore elements in $K$-theory  $K^1(X)$,
\begin{align*}
[\sigma_0^{-1}\circ \sigma^+_H(P)]\in K^1(X)\\
[\sigma_0^{-1}\circ \sigma^-_H(P)]\in K^1(X)\\
\end{align*}
\begin{remark}
The vector bundles $F^0\otimes V^\pm$ are infinite direct sums of vector bundles of finite fiber dimension.
The automorphisms $\sigma_0^{-1}\circ \sigma^\pm_H(P)$, when compressed to a {\em finite} sum
$F^0\otimes \bigoplus_{j=0}^N \mathrm{Sym}^j H$, become stably equivalent (i.e., $K$-theoretically equivalent) for sufficiently large values of $N$.
Therefore, taking $N>>0$ sufficiently large $[\sigma_0^{-1}\circ \sigma^\pm_H(P)]$ are well-defined elements in $K$-theory.
In particular, the Chern characters of $[\sigma_0^{-1}\circ \sigma^\pm_H(P)]$ can be computed by the usual formalism taking $N>>0$ sufficiently large.
\end{remark}

The $\CC$ vector bundles $E^+$ and $E^-$ on $X\times S^1$ are constructed by a familiar clutching construction.
Quite generally, for a topological space $W$ with vector bundle $V\to W$, an automorphism 
$\sigma\;\colon\; V\to V$
determines a $\CC$ vector bundle $\nu(\sigma)$ on $W\times S^1$.
We set $S^1=S^1_+\cup S^1_-$,
where $S^1_+$ and $S^1_-$ are the upper and lower hemispheres of $S^1=\{|z|=1\}\subset \CC$.
Then $\nu(\sigma)$ is the $\CC$ vector bundle on $W\times S^1$ obtained as the quotient
\[ \nu(\sigma)=V\times S^1_+\cup_\sigma V\times S^1_-,\]
where the identification using $\sigma$ is
\[ (w,v, -1) \sim (w,v,-1),\quad
(w,v,1) \sim (w,\sigma(w)v,1)\qquad w\in W, v\in V_w, \pm 1\in S^1\]
Then on $X^+\times S^1$ we set
\[ E^+ = \nu(\sigma_0^{-1}\circ \sigma^+_H(P))\]
while on $X^-\times S^1$ we have
\[ E^- = \nu(\sigma_0^{-1}\circ \sigma^-_H(P))\]

{\vskip 6pt \noindent \bf The continuous map $\varphi$.}
The map $\varphi\,\colon M\to X$ is projection on the first factor $X$ for each of the two copies of $X\times S^1$.

\begin{theorem}\label{genthm}
Let $P$ be a Heisenberg-elliptic operator on a closed co-orientable contact manifold $X$,
\[ P\;\colon\; C^\infty(X,F^0)\to C^\infty(X,F^1)\]
Then the element in $K$-homology determined by $P$ is
\begin{equation*}
 [P]\;= \; [\sigma_0^{-1}\circ \sigma^+_H(P)]\;\cap\; [X^+]
\; + \; [\sigma_0^{-1}\circ \sigma^-_H(P)]\;\cap\; [X^-]
\end{equation*}
Equivalently, a $K$-cycle that represents $[P]$ in geometric $K$-homology is
\[ \mu^{-1}([P]) = (X^+\times S^1\sqcup X^-\times S^1,\, E^+\sqcup E^-,\, \varphi)\]
\end{theorem}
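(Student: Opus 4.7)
The plan is to identify the analytic $K$-homology class $[P]$ with an explicit sum of twisted Dirac classes, by exploiting the Bargmann--Fock picture of the Heisenberg symbol algebra together with the Baum--Douglas realization of the cap product in $K$-homology. First I would reduce to a convenient normal form: a standard homotopy through Heisenberg-elliptic operators allows us to assume $F^0=F^1=:F$ and that the equatorial symbol is the identity, at the cost of replacing $\sigma_H^\pm(P)$ by $\sigma_0^{-1}\circ\sigma_H^\pm(P)$; this does not change $[P]$ in $KK^0(C(X),\CC)$. Then, from the exact sequence $0\to\KK\to\Psi_H\to S_H\to 0$ noted above, $[P]$ depends only on the class of the Heisenberg symbol $\sigma_H(P)$ in $K_1(S_H)$.

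The fibers of the symbol algebra $S_H$ are essentially the reduced $C^*$-algebras of the osculating Heisenberg groups, whose non-trivial irreducible representations, by Stone--von Neumann, split into two continuous families parametrized by the two co-orientations of the contact hyperplane. Applying the Bargmann--Fock representation fiberwise realizes the invertible symbol $\sigma_H(P)$ as two honest bundle automorphisms $\sigma_H^\pm(P)$ of $F\otimes V^\pm$, producing (after the stabilization of the remark) the two $K^1(X)$-classes appearing in the formula. The geometric core of the argument is the identification of each $[\sigma_H^\pm(P)]\cap[X^\pm]$ with an analytic Kasparov cycle built directly from $P$. By the Baum--Douglas description, the cap product is realized by the twisted Spin$^c$ Dirac operator on $X^\pm\times S^1$ coupled to the clutching bundle $\nu(\sigma_H^\pm(P))$, pushed forward to $X$; the Spin$^c$ structure is the one induced by the symplectic form associated to $\theta$ and the chosen co-orientation. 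I would then construct a homotopy of Kasparov cycles connecting $[P]$ on $X$ to the sum of these two pushed-forward twisted Dolbeault--Dirac operators, using the tangent-groupoid techniques from the van Erp papers cited in the introduction, extended to keep track of both co-orientations simultaneously.

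The main obstacle will be carrying out this last step canonically and compatibly with both co-orientations at once. The Bargmann--Fock construction produces two independent Fredholm families, one for each choice of co-orientation, and one must show that their Kasparov products with $[X^+]$ and $[X^-]$ assemble to exactly $[P]$, with no spurious equatorial contribution. This requires proving that, after the normalization above, the equatorial part of the symbol contributes trivially in $K$-homology --- so that $[P]$ is genuinely pinned down by the pair $(\sigma_H^+(P),\sigma_H^-(P))$ --- and then matching the two cap products via the clutching--suspension isomorphism between $K^1(X)$ and the relative $K$-theory of $(X\times S^1, X\times\{\ast\})$, compatibly with the Baum--Douglas machinery. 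This canonical compatibility is also what allows the same formula to descend unchanged to the equivariant and families index problems, as asserted in the introduction.
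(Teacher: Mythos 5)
Your proposal captures the right ingredients---Bargmann--Fock realization of the Heisenberg symbol, twisted Dirac operators on $X^\pm\times S^1$ as Baum--Douglas representatives of the cap products, and tangent-groupoid deformation to connect operator and symbol---but the step you yourself flag as the ``main obstacle'' is precisely where the proof lives, and the route you sketch toward it would run into trouble as stated.

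The first issue is the choice of the $K$-theoretic invariant that mediates between $P$ and the geometric $K$-cycle. You propose working with the class of the symbol in $K_1(S_H)$, where $S_H$ is the quotient of the norm-closed algebra of order-zero Heisenberg operators. This discards structure that the argument actually needs: the paper works instead with $[\sigma_H(P)]\in K_0(C^*(T_HX))$, where $T_HX$ is the smooth \emph{groupoid} of osculating Heisenberg groups, and --- crucially --- treats everything in the $RKK$-category over $C(X)$. The $C(X)$-module structure is what allows one to upgrade the usual tangent-groupoid isomorphisms from $K$-theory ($KK(\CC,-)$) to the functor $KK(C(X),-)$, via nuclearity and the six-term sequence. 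Without that, the groupoid argument produces only index information, not the class $[P]\in KK^0(C(X),\CC)$, and you cannot conclude.

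The second issue is that "construct a homotopy of Kasparov cycles connecting $[P]$ to the sum of the two pushed-forward Dirac operators" is not a single move; it is the theorem. The paper factors it into three independent commutativity statements. One is a purely topological one: a noncommutative Poincar\'e duality map $b\colon K_0(C^*(T_HX))\to K_0^{top}(X)$, built from an explicit $U(n)$-equivariant decomposition $0\to I_H\to C^*(T_HX)\to C_0(H^*)\to 0$, fiberwise Bargmann integral transform, and Thom/Morita equivalences, which is shown to agree with the classical clutching map $c$ through the Connes--Thom isomorphism $\Psi\colon K^0(T^*X)\to K_0(C^*(T_HX))$. A second is the analytic triangle: the tangent-groupoid ``choose-an-operator'' map ${\rm Op}_H$ lifted to $KK(C(X),-)$ and shown to agree with ${\rm Op}_e\circ\Psi^{-1}$, so that $\mu\circ b={\rm Op}_H$. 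The third is the concrete evaluation of $b(\sigma_H(P))$, which uses a $K$-theoretic suspension lemma to convert the lift of the symbol to $K_0(I_H)$ into the two $K^1(X)$ classes $[\sigma_0^{-1}\circ\sigma_H^\pm(P)]$. Your proposal mentions none of $\Psi$, $b$, or the $U(n)$-equivariance that makes the Morita step (and the vanishing of the Dixmier--Douady class of the bundle of Fock algebras) work. Absent the factorization through $K_0(C^*(T_HX))$ and the Connes--Thom isomorphism, there is no mechanism to canonically split the proposed homotopy into the two co-orientation components without encountering exactly the ambiguity you name. In short: the ideas are pointed in the right direction, but the proposal is missing the intermediate noncommutative Poincar\'e duality and the $U(n)$-equivariant structure theory of $C^*(G)$ that make the split into $\pm$ pieces unambiguous.
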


\subsection{Characteristic class formula}

If $X$ is a finite $CW$ complex and $\xi$ is an element in $KK^0(C(X), \CC)$,
\[ \xi \in KK^0(C(X),\CC),\]
then the homology Chern character of $\xi$ is an element ${\rm ch}(\xi)\in H_{ev}(X; \QQ)$ with the property: Whenever $F$ is a $\CC$ vector bundle on $X$,
\[ {\rm Index}\,(F\otimes \xi) = \epsilon_*({\rm ch}(F)\cap {\rm ch}(\xi)).\] 
Here $H_{ev}(X;\QQ)$ is the direct sum of the even rational homology groups of $X$,
\[ H_{ev}(X;\QQ) = H_0(X;\QQ)\oplus H_2(X;\QQ)\oplus H_4(X;\QQ)\oplus \cdots\]
and $\epsilon\,\colon X\to  \bullet$ is the map of $X$ to a point and $\epsilon_*\,\colon H_*(X;\QQ)\to H_*(\bullet; \QQ)=\QQ$ is the resulting map in rational homology.
As usual in algebraic topology ${\rm ch}$ is the Chern character and $\cap$ is cap product.

To explicitly construct a $K$-cycle $(M,E,\varphi)$ on $X$ with
\[ \mu(M,E,\varphi) = \xi\]
is to solve the index problem  for $\xi$ {\em integrally}. 
To solve the index problem for $\xi$ {\em rationally} is to give an explicit formula for ${\rm ch}(\xi)$.
If an $(M,E,\varphi)$ has been constructed, then
\[ {\rm ch}(\xi) = \varphi_*({\rm ch}(E)\cup {\rm Td}(M)\cap [M])\]
where $[M]$ is the fundamental cycle in $H_*(M;\ZZ)$ of $M$ 
and $\varphi_*\,\colon H_*(M;\QQ)\to H_*(X;\QQ)$ is the map of rational homology induced by $\varphi\,\colon M\to X$.

Thus, Theorem \ref{genthm} immediately implies the following characteristic class formula.
\begin{corollary}\label{thmchar}
The Fredholm index of a Heisenberg-elliptic operator $P$ on a closed contact manifold $X$,
\[ P\colon C^\infty(X,F^0)\to C^\infty(X,F^1)\]
is given by 
\begin{align*}
{\rm Index}\,P &= \int_X {\rm ch}(\sigma_0^{-1}\circ \sigma^+_H(P))\wedge {\rm Td}(H^{1,0})\\ 
&+ (-1)^{n+1} \int_X {\rm ch}(\sigma_0^{-1}\circ \sigma^-_H(P)) \wedge {\rm Td}(H^{0,1})
\end{align*}
Here $X$ is oriented (in both integrals) by the volume form $\theta(d\theta)^n$.
\end{corollary}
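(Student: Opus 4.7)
The plan is to apply the homology Chern character to both sides of Theorem~\ref{genthm} and unpack each term with the index/characteristic-class dictionary reviewed immediately before the corollary statement.

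Since the Fredholm index of $P$ agrees with $\mathrm{Index}(\underline{\CC}\otimes [P]) = \epsilon_*(\mathrm{ch}[P])$ (with $\underline{\CC}$ the trivial line bundle and $\epsilon\colon X\to\bullet$), and since the two cap-product terms on the right-hand side of Theorem~\ref{genthm} are by construction $\mu$ applied to two explicit $(M,E,\varphi)$-cycles, I would split the identity into its two co-oriented contributions and to each apply
\[\mathrm{ch}\bigl(\mu(M,E,\varphi)\bigr) \;=\; \varphi_*\bigl(\mathrm{ch}(E)\cup\mathrm{Td}(M)\cap [M]\bigr).\]
After pushing forward to a point, each contribution becomes an integral over $X^\pm\times S^1$, and the problem reduces to identifying the integrand explicitly.

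Two essentially routine inputs then suffice. First, the Spin$^c$ structure on $X^+\times S^1$ is induced by the almost-complex structure for which $T(X^+\times S^1)\cong H^{1,0}\oplus\underline{\CC}$, the trivial complex line being spanned by the Reeb field $T$ and $\partial/\partial t$; since the trivial complex line contributes $1$ to the Todd class, one has $\mathrm{Td}(X^+\times S^1) = \mathrm{Td}(H^{1,0})$, and reversing the co-orientation conjugates $J$, so $\mathrm{Td}(X^-\times S^1) = \mathrm{Td}(H^{0,1})$. Second, for any automorphism $\alpha$ of a vector bundle $V\to W$, the clutched bundle $\nu(\alpha)$ on $W\times S^1$ satisfies $\int_{S^1}\mathrm{ch}(\nu(\alpha)) = \mathrm{ch}([\alpha]) \in H^{\mathrm{odd}}(W;\QQ)$, which is the standard behaviour of the Chern character under the $K^0/K^1$-suspension isomorphism. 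Combining these, each integral over $X^\pm\times S^1$ collapses to an integral over $X$ of $\mathrm{ch}(\sigma_0^{-1}\circ\sigma_H^\pm(P))\wedge\mathrm{Td}(H^{1,0})$ respectively $\mathrm{Td}(H^{0,1})$, taken with respect to the canonical complex orientation of $X^\pm$.

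The final step, which I expect to be the only real obstacle, is the sign bookkeeping. Both integrals in the corollary are written with $X$ carrying the fixed orientation $\theta(d\theta)^n$. For $X^+$ this agrees with the complex orientation produced above, so the first term appears with a plus sign. For $X^-$ the co-orientation is reversed, so the contact form is $-\theta$ and the associated volume form is $(-\theta)\wedge(-d\theta)^n = (-1)^{n+1}\,\theta(d\theta)^n$; compensating for this mismatch between the complex orientation of $X^-$ and the fixed orientation of $X$ produces exactly the factor $(-1)^{n+1}$ in front of the second integral, completing the proof.
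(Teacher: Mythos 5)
Your proposal is correct and follows the same route the paper takes: apply the homology Chern character formula ${\rm ch}(\mu(M,E,\varphi))=\varphi_*({\rm ch}(E)\cup{\rm Td}(M)\cap[M])$ to the $K$-cycle of Theorem~\ref{genthm}, integrate out the $S^1$ factor via the standard suspension/clutching identity, identify ${\rm Td}(X^\pm\times S^1)$ with ${\rm Td}(H^{1,0})$ resp.\ ${\rm Td}(H^{0,1})$, and track the sign $(-1)^{n+1}$ coming from the conjugate almost complex structure. The paper records this only as a one-line remark after the corollary, and your write-up fills in exactly the routine steps it leaves implicit.
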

The factor $(-1)^{n+1}$ for the second term arises from the fact that $X^-\times S^1$ has the conjugate almost complex structure of $X^+\times S^1$,
and is of complex dimension $n+1$.

\begin{remark}
In several special cases, characteristic class formulas for the index of Heisenberg-elliptic operators have been derived before. 
A Toeplitz operator is (essentially) an order zero pseudodifferential operator in the Heisenberg calculus, and the Toeplitz index formula of Boutet de Monvel \cite{Bo79} is a special case of our formula.
We discuss this in detail in section \ref{Toeplitz}.

Epstein and Melrose prove a characteristic class formula for the index of the twisted sublaplacians $P_\gamma$,
and a separate formula for a class of operators they refer to as Hermite operators (a generalization of Toeplitz operators)  \cite{Epxx}.
Van Erp's index formula in \cite{vE10b} only applies in the special case of {\em scalar} Heisenberg-elliptic operators. 
Before now, no explicit result existed that was generally applicable and thus unified these various results.
In the absense of a general formula, the solution of the equivariant and families index problems was out of reach.
All previously obtained  formulas \citelist{\cite{Bo79} \cite{Epxx} \cite{vE10b}} are easily seen to be special cases of Corollary \ref{thmchar}. 
\end{remark}

\subsection{Examples}\label{examples}

Below are examples which illustrate that the geometric $K$-cycle of Theorem \ref{genthm} is explicitly computable and yields simple  clear formulas.

\begin{example}
The $K$-cycle for a second order Heisenberg-elliptic operator
\[P_\gamma=I_r\otimes \Delta_H + i\gamma(I_r\otimes T)\;\colon\; C^\infty(X,\CC^r)\to C^\infty(X,\CC^r)\]
is made explicit if we compute the principal Heisenberg symbol of $P_\gamma$ (see section \ref{final}).
The vector bundle $E^+\to X^+\times S^1$ in the $K$-cycle for $P_\gamma$ is constructed from the winding of $\gamma$
around the positive integers $n+2j$. For each $j=0,1,2,\dots$ the smooth map
\[ \gamma-(n+2j)\;\colon\; X\to GL(r,\CC)\]
determines an automorphism of the trivial vector bundle $X\times \CC^r$.
If $\nu$ denotes the clutching construction described above,
we obtain a $\CC$ vector bundle $\nu(\gamma-(n+2j)I_r)$ on $X^+\times S^1$. 
Then $E^+$ on $X^+\times S^1$ is the $\CC$ vector bundle
\[ E^+ = \bigoplus_{j=0}^N \;\nu(\gamma-(n+2j)I_r)\,\otimes\,    \varphi^* {\rm Sym}^j H^{1,0} \]
Likewise, $E^-$ on $X^-\times S^1$ is constructed from the winding of $\gamma$
around the negative integers $-(n+2j), j=0,1,2,\dots$,
\[ E^- = \bigoplus_{j=0}^N \;\nu(\gamma+(n+2j)I_r) \,\otimes\,\varphi^* {\rm Sym}^j H^{0,1}\]
\end{example}

\begin{example}
The odd-dimensional unit spheres $S^{2n+1}$ of $\RR^{2n+2}$ have standard contact form
\[ \theta = \sum_{i=1}^{n+1} x_{2i}dx_{2i-1}-x_{2i-1}dx_{2i} \]
where $x_1, x_2, \dots, x_{2n+2}$ are the standard coordinates on $\RR^{2n+2}$.
Theorem \ref{genthm} applies to give a simple and clear index formula for the $P_\gamma$ operators on $S^{2n+1}$. 
Any continuous map $f \colon S^{2n+1} \to GL(r, \CC)$ determines an element
in the homotopy group $\pi_{2n+1}(GL)$ of  $GL=\lim_{k \to \infty} GL(k,\CC)$. 
Bott periodicity identifies $\pi_{2n+1}(GL)$ with the integers, 
\[ \beta\;\colon\; \pi_{2n+1}(GL)\cong \ZZ.\]
Therefore an integer $\beta (f)$ has been assigned to $f$,  
\[ \beta(f) = ch(f)[S^{2n+1}] = \int_{S^{2n+1}} \mathrm{Tr}\left( \frac{f^{-1}df}{-2\pi i}\right)^{2n+1} \] 
The formula for the index of $P_\gamma$ is
\[
\mathrm{Index}\, P_\gamma = \sum_{j=0}^N 
\left(\begin{array}{c}n+j-1\\j\end{array}\right) 
\left[\beta(\gamma-(n+2j)I_r) +(-1)^{n+1}\beta(\gamma+(n+2j)I_r)\right]
\]
where $N$ is sufficiently large, as above. 
Observe that on an odd dimensional sphere any complex vector bundle is stably trivial.
Thus $H^{1,0}$ is stably trivial, and in the $K$-cycle for $P_\gamma$ we may replace $\varphi^*\mathrm{Sym}^j H^{1,0}$ by a trivial vector bundle of the same rank, which is $\left(\begin{array}{c}n+j-1\\j\end{array}\right)$.

In particular, on $S^3$ we obtain the very simple formula
\[
\mathrm{Index}\, P_\gamma = \sum_{k\,{\rm odd}} \beta(\gamma-kI_r)
\]
\end{example}

\section{Outline of the proof}\label{proofoutline}

In \cite{vE10a}  we showed that the principal Heisenberg symbol $\sigma_H(P)$ of a Heisenberg-elliptic operator $P$ determines an element in the $K$-theory group of a noncommutative $C^*$-algebra
\[ [\sigma_H(P)]\in K_0(C^*(T_HX))\]
$T_HX$ is the tangent bundle $TX$, where each fiber $T_xX=H_x\times \RR$ has the structure of a nilpotent Lie group isomorphic to the Heisenberg group.
$C^*(T_HX)$ is the convolution $C^*$-algebra of the groupoid $T_HX$.
The Connes-Thom isomorphism in analytic $K$-theory gives a canonical isomorphism
\[ \Psi\;\colon\; K^0(T^*X)\stackrel{\cong}{\longrightarrow} K_0(C^*(T_HX))\]

The proof of Theorem \ref{genthm} will be accomplished by constructing from the $K$-theory element $[\sigma_H(P)]$ the relevant $K$-cycle.
This will be done in three steps.

\vskip 6pt
\noindent {\bf Step 1}. In section \ref{NCT}, by a direct construction, an isomorphism of abelian groups 
\[ b\;\colon\; K_0(C^*(T_HX)) \to K^{top}_0(X)\]
will be defined such that there is commutativity in the triangle
\[ \xymatrix{   K^0(T^*X) \ar[dr]_c \ar[rr]^{\Psi} && K_0(C^*(T_HX))\ar[dl]^{b}\\
              & K_0^{top}(X) &
 }
\]
where $c$ is the standard Poincar\'e duality map.

\vskip 6pt
\noindent {\bf Step 2}. In section \ref{AnalyticTriangle} we prove that $b(\sigma_H(P))$ is the correct $K$-cycle, i.e.,
\[  \mu^{-1}([P]) = b(\sigma_H(P))\]
\vskip 6pt
\noindent {\bf Step 3}. Finally, in section \ref{computing_b}, $b(\sigma_H(P))$ is explicitly calculated, i.e., we prove
\[ b(\sigma_H(P)) = [\sigma^{-1}\circ \sigma^+_H(P)]\cap[X^+] + 
[\sigma^{-1}\circ \sigma^-_H(P)]\cap[X^-]\]

\section{Noncommutative topology of contact structures}\label{NCT}

In this section, the noncommutative Poincar\'e duality map
\[ b\;\colon\; K_0(C^*(T_HX))\to K^{top}_0(X)\]
will be defined.
$T_HX$ is the tangent bundle $TX$ with each fiber $T_xX$ viewed as a nilpotent Lie group.
If $\theta$ is a contact form on $X$, with contact hyperplane bundle $H:=\mathrm{Ker}\,\theta$,
then the tangent space $T_xX=H_x\times \RR$ at a point $x\in X$ has the structure of a nilpotent Lie group that is isomorphic to the Heisenberg group,
\[ (v,t)\cdot (v',t') = (v+v',t+t'-\frac{1}{2}d\theta(v,v'))\qquad v,v;\in H_x, t,t'\in \RR\]

The Type I $C^*$-algebra $C^*(T_HX)$ decomposes into stably commutative factors, i.e., there is short exact sequence of $C^*$-algebras
\[ 0\to I_H \to C^*(T_HX) \to C_0(H^*) \to 0\]
where $C_0(H^*)$ is commutative, while the ideal $I_H$ is Morita equivalent to a commutative $C^*$-algebra.
Moreover, this short exact sequence is the {\em quantization} of, and is $K$-theoretically equivalent to the short exact sequence of commutative $C^*$-algebras
\[ 0\to C_0(T^*X\setminus H^*)\to C_0(T^*X)\to C_0(H^*)\to 0\]
The definition and explicit calculation of $b$ is made possible by this reduction to commutative $C^*$-algebras.

\subsection{The $C^*$-algebra of the Heisenberg group}\label{group_algebra}

The $C^*$-algebra $C^*(T_HX)$ is a locally trivial bundle of $C^*$-algebras over $X$, whose fiber is the group $C^*$-algebra $C^*(G)$ of the Heisenberg group.
We start with a careful description of the structure of this algebra.
 
Let $V=\RR^{2n}$ be the standard symplectic vector space with symplectic form $\omega$. 
Multiplication in the Heisenberg group $G=V\times \RR$ is defined by
\[ (v,t)\cdot (v',t') = (v+v',t+t'+\frac{1}{2}\omega(v,v')).\]
The factor $\frac{1}{2}$ is chosen so that for two vectors $v,w\in V$ the Lie bracket is given by the symplectic form $[v,w]=\omega(v,w)$.

Fourier transform in the $\RR$ variable for functions $f\in C_c^\infty(G)$,
\[ \tilde{f}(v,s) = \int_\RR e^{-its}f(v,t)dt\]
completes to a $\ast$-isomorphism,
\[ C^*(G)\cong C^*(V\times \RR^*,c_\omega).\]
Here $V\times \RR^*$ denotes the vector bundle over $\RR^*$ with fiber $V$.
We may think of it as a smooth groupoid with object space $\RR^*$. 
The expression $c_\omega$ refers to the groupoid cocycle 
\[ c_\omega\;\colon\; V\times V\times \RR^*\to U(1)\;,\;c_\omega((v,s),(w,s)) = \exp{(is\omega(v,w))}.\]
The $C^*$-algebra $C^*(V\times \RR^*,c_\omega)$
is the completion of the groupoid convolution algebra $C_c^\infty(V\times \RR^*)$
twisted by the cocycle $c_\omega$, 
\[ (\tilde{f}\ast_{c} \tilde{g})(v,s) = \int_V e^{is\omega(v,w)} \tilde{f}(v-w,s)\tilde{g}(w,s)\,dw.\]
Elements in the $C^*$-algebra $C^*(G)$ can be identified with sections in a continuous field over $\RR^*$.
The fiber at $s\in \RR^*$ is the twisted convolution $C^*$-algebra $C^*(V,s\omega)$ 
(now with fixed value of the parameter $s$ appearing in the cocycle).
At $s=0$ the fiber is commutative, $C^*(V)\cong C_0(V^*)$.
If $s\ne 0$ the field is trivial,
and each fiber is isomorphic to the algebra of compact operators,
$C^*(V,s\omega)\cong \KK$.
Restriction to $s=0$ gives the decomposition
\[ 0\to C^*(V\times \RR^\times, c_\omega)\to C^*(G)\to C_0(V^*)\to 0,\]
A simple rescaling of the parameter $s\in \RR^\times=\RR^*\setminus \{0\}$ gives an isomorphism
\[ C^*(V\times \RR^\times, c_\omega)\cong C_0(-\infty,0)\otimes C^*(V,-\omega)\oplus C_0(0,\infty)\otimes C^*(V,\omega).\]
We will identify
\[ \RR^\times = (-\infty,0)\sqcup (0,\infty)\approx \RR\sqcup \RR\]
by the map $s\mapsto \log{|s|}$ for $s\in \RR^\times$.
This fixes  an isomorphism
\[ C^*(V\times \RR^\times, s\omega)\cong C_0(\RR)\otimes C^*(V,-\omega)\oplus C_0(\RR)\otimes C^*(V,\omega).\]
While both $C^*$-algebras $C^*(V,\pm\omega)$
are isomorphic to $\KK$, the algebra of compact operators on Hilbert space,  we do not identify these two algebras, for reasons that will soon become apparent.

\subsection{$U(n)$ symmetry}
A choice of contact 1-form $\theta$ and compatible complex structure $J$ in the fibers of $H$
amounts to a reduction of the structure group of $TX$ to $U(n)$.
For this reason it is useful to consider the $U(n)$ symmetry of the group $C^*$-algebra $C^*(G)$.

With the standard identification $V=\RR^{2n}=\CC^n$
the canonical action of the unitary group $U(n)$ on $V=\CC^n$
induces automorphisms of the Heisenberg group $G=V\times\RR$,
and therefore of the $C^*$-algebra $C^*(G)$.
Because the cocycle $c_\omega$ is invariant under the $U(n)$ action on $V$,
$U(n)$ also acts in the obvious way on the $C^*$-algebra $C^*(V\times \RR^*, c_\omega)$,

The $U(n)$ action on functions on $G=V\times \RR$ commutes with the Fourier transform in the $\RR$ variable.
Therefore the isomorphism
\[ C^*(G)\cong C^*(V\times \RR^*,c_\omega)\]
is $U(n)$ equivariant.
Likewise, the isomorphism 
\[ C^*(V\times\RR^\times, c_\omega) \cong C(\RR)\otimes C^*(V,\omega)\oplus C(\RR)\otimes C^*(V,-\omega)\]
is $U(n)$ equivariant.

Now choose a representation
\[ C^*(V,\omega)\cong \KK(\HH)\]
Up to unitary equivalence, this is a uniquely determined irreducible unitary representation of the Heisenberg group $G$. Below we will choose an explicit model for this representation.
But first we consider the $U(n)$ action on $\KK(\HH)$ induced by its action on $C^*(V,\omega)$.
The automorphism group of $\KK(\HH)$ is the projective unitary group $U(\HH)/U(1)$ of the separable Hilbert space $\HH$.
As we will see below, the  action of $U(n)$ on $\KK(\HH)$
lifts to a unitary representation of $U(n)$ on the Hilbert space $\HH$.
This is specific fact about the representation theory of the Heisenberg group is of crucial importance for our analysis,
because it  implies, later on, the vanishing of the Dixmier-Douady invariant of a specific continuous trace $C^*$-algebra over $X$,
providing us with an important Morita equivalence.

For the opposite algebra $C^*(V,-\omega)$
we have the isomorphism
\[ C^*(V,-\omega)\cong \KK(\overline{\HH}),\]
where $\overline{\HH}$ denotes the Hilbert space that is conjugate (or dual) to $\HH$, and that carries the dual representation of $G$ as well as of $U(n)$.
This isomorphism is equivariant with respect to the $U(n)$ actions,
and the Hilbert space $\overline{\HH}$ is {\em not} ismorphic to $\HH$ 
{\em as a representation space} of $U(n)$.

In summary, we obtain a $U(n)$ equivariant isomorphism
\[ C^*(V\times \RR^\times,c_\omega)
\cong C_0(\RR)\otimes \KK(\HH)
\oplus C_0(\RR)\otimes \KK(\overline{\HH}).\]
It will be useful to have a concrete model for the Hilbert space $\HH$
with its representations of the Heisenberg group $G$ and the unitary group $U(n)\subset {\rm Aut}(G)$.

\subsection{Bargmann-Fok space}

The representation space of the Heisenberg group $G$ that most clearly exhibits the $U(n)$ action is the Bargmann-Fok space.
As a $U(n)$ representation, the Bargmann-Fok space is a completion of the space of symmetric tensors of $\CC^n$,
\[ \HH^{BF} \supset {\rm Sym}\, \CC^n = \bigoplus_{j=0}^\infty {\rm Sym}^j\, \CC^n\]
with its standard $U(n)$ action.
If we identify symmetric tensors with complex polynomials on $\CC^n$
then the Bargmann-Fok space is the Hilbert space of entire functions on $\CC^n$ with inner product
\[ \ang{f,g} = \pi^{-n}\int f(z)\overline{g(z)} \,e^{-|z|^2} dz .\]
If $Z_1, \dots, Z_n$ is an orthonormal basis for $\CC^n$
then the monomials $Z_1^{m_1}\cdots Z_n^{m_n}/\sqrt{m_1!\cdots m_n!}$ form an orthonormal basis for $\HH^{BF}$.
In particular, the summands ${\rm Sym}^j\, \CC^n$ are mutually orthogonal.

Let us briefly review how the Heisenberg group $G$ acts on this space.
To describe the representation it is convenient to pass to the complexified Lie group $G_\CC$.
In order not to get confused, let us denote by $J\colon V\to V$ the standard complex structure on $V=\RR^{2n}$ and {\em not} identify $V$ with $\CC^n$ here.
The complexified space $V\otimes \CC$ splits into the $\pm \sqrt{-1}$ eigenspaces for $J$ as $V\otimes \CC = V^{1,0} \oplus V^{0,1}$.
Correspondingly, the complexification of the Lie algebra $\Lg$ of $G$ splits as a direct sum 
\[ \Lg \otimes \CC = V^{1,0}\oplus V^{0,1}\oplus \CC. \]
Recall that $\omega(Ju,Jv) = \omega(u,v)$ and $\omega(Jv,v)>0$ if $v\ne 0$.
If we extend the bilinear form $\omega$ to a complex bilinear form on $V\otimes \CC$ then the expression 
\[ \ang{z,w} := i\omega(z,\bar{w})\]
defines a hermitian inner product on $V^{1,0}$. 
Let $\{Z_1,\ldots,Z_n\}$ be a basis for $V^{1,0}$ that is orthonormal with respect to this hermitian product. 
For elements in the complexified Lie algebra $A, B\in \Lg\otimes \CC$ we have $[A,B]=\omega(A,B)=-i\ang{A,\bar{B}}$. Therefore
\[ [Z_j,\bar{Z}_k] =  \frac{1}{i} \delta_{jk} .\]
Since $V^{1,0}$ is orthogonal to $V^{0,1}$ we also obtain:
\[ [Z_j, Z_k] = [\bar{Z}_j, \bar{Z}_k] = 0.\]
In this picture the Bargmann-Fok space $\HH^{BF}$ is the completion of ${\rm Sym} V^{1,0}$, and can be identified with a space of entire functions on $V^{0,1}$. 
The representation $\pi$ of the complexified Lie algebra $\Lg\otimes\CC$ on $\HH^{BF}$ is given by
\[ \pi(Z_j) = iz_j \;,\; \pi(\bar{Z}_j) = i\frac{\partial}{\partial z_j} \;,\; \pi(1) = i .\]
The multiplication operator $z_j$ and complex derivative $\partial/\partial z_j$
are unbounded linear operators on $\HH^{BF}$.
The subspace of polynomials ${\rm Sym}\,V^{1,0}$ is an invariant subspace for both operators,
and hence also for the universal enveloping algebra $\UU(\Lg)$ of the Lie algebra.
The operators $z_j$ and $\partial/\partial z_j$ are adjoints, which implies that $\pi(W)^*=-\pi(\bar{W})$ for all elements $W\in \Lg\otimes \CC$.
Therefore $\pi(v)$ is skew adjoint for elements $v\in \Lg$ in the real Lie algebra,
and the corresponding representation for the group $G$ is unitary.


In summary, we have $U(n)$ equivariant $\ast$-homomorphisms
\[ \xymatrix{    C^*(G) \ar[d]_{f\mapsto\tilde{f}(-,1)}\ar[rd]^{\pi}\\ 
                 C^*(V,\omega) \ar[r]^{\cong}  
               & \KK(\HH^{BF}).
                }
\]
The horizontal arrow is the Bargmann integral transform, which assigns to a function on $V$ (or its Fourier transform on $V^*$) an operator on $\HH^{BF}$
\cite{Ba61}.
For the opposite algebra $C^*(V,-\omega)$ we obtain a representation on the conjugate space $\overline{\HH}^{BF}$,
which is a completion of the space of symmetric tensors of $V^{0,1}$
carrying the dual (or conjugate) representation of $U(n)$.
We will adopt the notation  
\[ \HH^{BF}_+ = \HH^{BF},\; \HH^{BF}_- = \overline{\HH}^{BF}\]

\begin{figure}[h!]
\centering
\includegraphics[width=12cm]{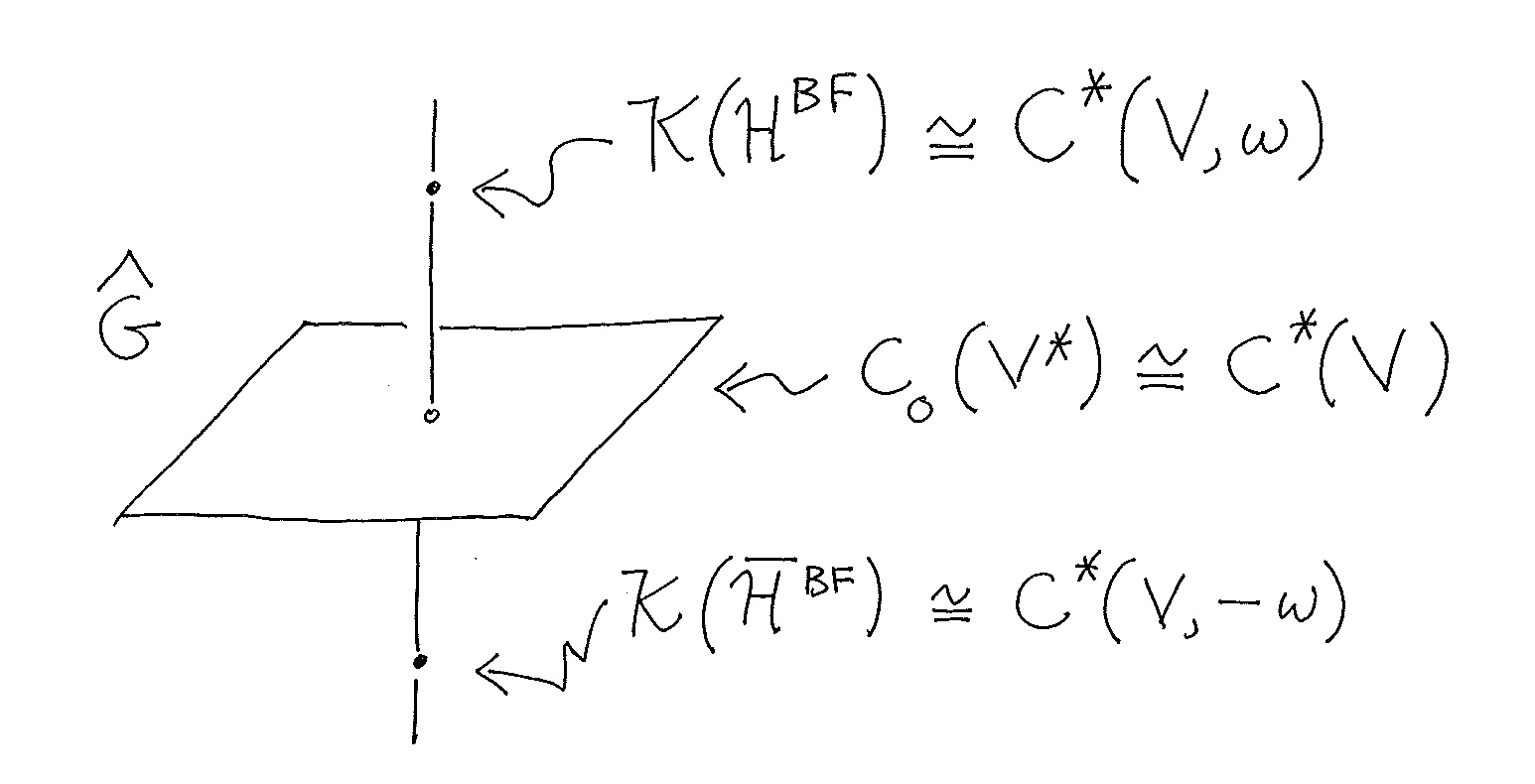}
\caption{The short exact sequence $0\to C_0(\RR^\times,\KK)\to C^*(G)\to C_0(V^*)\to 0$}
\end{figure}

\subsection{Quantization}

Recall that the group $C^*$-algebra $C^*(G)$ is a quantization of the commutative algebra $C_0(\Lg^*) = C_0(V^*\times \RR^*)$.
Consider the groupoid $V\times \RR^*\times [0,1]$, which is, algebraically, a disjoint union of copies of the abelian group $V$ parametrized by $(s,\varepsilon)\in \RR^*\times [0,1]$.
For this groupoid, define the groupoid $2$-cocycle
\[ c_{\varepsilon\omega}\;\colon\; V\times V\times \RR^*\times [0,1]\to U(1)\;,\;c_{\varepsilon\omega}((v,s,\varepsilon),(w,s,\varepsilon)) = \exp{(is\varepsilon\omega(v,w))}.\]
Then we have the twisted convolution $C^*$-algebra
\[ A = C^*(V\times \RR^*\times [0,1],c_{\varepsilon \omega}),\]
We can think of elements in this $C^*$-algebra $A$ as sections in a continuous field $\{A_\varepsilon\}$ 
with parameter $\varepsilon \in [0,1]$.
For $\varepsilon \in (0,1]$ the field is trivial, with $A_\varepsilon\cong A_1=C^*(G)$.
At $\varepsilon = 0$ we have the commutative algebra
\[ A_0 = C^*(V\times \RR^*)\cong C_0(V^*\times \RR^*).\]
We thus have a (strong deformation) quantization from $A_0$ to $A_1$.
As is well-known, this quantization induces an invertible $E$-theory element in $E(A_0,A_1)$
that, in turn, corresponds to an element in $KK(C_0(\Lg^*), C^*(G))$ that implements a $KK$-equivalence.

Now consider the decomposition discussed in section \ref{group_algebra}
\[ 0\to C^*(V\times \RR^\times, c_\omega) \to C^*(G)\to C^*(V)\to 0.\]
When restricted to the ideal
\[ C^*(V\times \RR^\times, c_\omega) 
\cong C_0(\RR)\otimes \KK(\HH^{BF}_-)
\oplus C_0(\RR)\otimes \KK(\HH^{BF}_+)
\]
the $KK$-equivalence $C^*(G)\sim C_0(\Lg^*)$ amounts to the {\em Morita equivalence}
\[ C_0(\RR)\otimes \KK(\HH^{BF}_-)
\oplus C_0(\RR)\otimes \KK(\HH^{BF}_+)\;
\sim\; 
C_0(\RR)\otimes \CC\oplus C_0(\RR)\otimes \CC\]
composed with the Bott isomorphisms $\beta_-\oplus \beta_+$, 
\[ \cdots\;
\sim\;C_0(\RR)\otimes C_0(V^*)\oplus C_0(\RR)\otimes C_0(V^*)\cong C_0(V^*\times \RR^\times).
\]
Here $\beta_+$ denotes the canonical Bott generator for the complex vector space $V^{1,0}= \CC^n$,
while $\beta_-$ denotes the Bott generator for the conjugate space $V^{0,1}$,
\[ \beta_+ \in KK(\CC, C_0(V^{1,0})),\;\;
\beta_- \in KK(\CC, C_0(V^{0,1})).\]
We obtain a $KK$-equivalence of short exact sequences:
\[ \xymatrix {
 0\ar[r]
 & C^*(V\times \RR^\times, c_\omega)\ar[r]\ar@{.>}[d]^{\cong}_{KK^{U(n)}}
 & C^*(G)\ar[r]\ar@{.>}[d]^{\cong}_{KK^{U(n)}}
 & C^*(V)\ar[r]\ar[d]^{\cong}  
 & 0
 \\
 0\ar[r]
 & C_0(V^*\times \RR^\times) \ar[r]
 & C_0(V^*\times \RR^*) \ar[r]
 & C_0(V^*)\ar[r]
 &0
 }
 \] 
Observe that all $\ast$-homomorphisms and $KK$-equivalences are $U(n)$-equivariant.
The diagram can therefore be interpreted as a commutative diagram in the category $KK^{U(n)}$.

\subsection{Associated bundles}
Our entire analysis  carries over to associated bundles (of groups, Hilbert spaces, $C^*$-algebras) over a contact manifold $X$.
As above, the contact 1-form $\theta$ and complex structure $J$ in the fibers of $H$
induce a reduction of the structure group of the tangent bundle $TX$ to $U(n)$.
Let $P_U$ denote the principal $U(n)$ bundle of orthonormal frames in $H^{1,0}$ with respect to its hermitian structure associated to $\theta, J$.

If $\alpha$ denotes the standard action $\alpha$ of $U(n)$ on $\CC^n$ then
\[ H^{1,0} = P_U\times_\alpha \CC^n\]
Likewise, the tangent space $TX$ can identified with
\[ TX = P_U \times_{\alpha\oplus 1} (\CC^n\oplus \RR) = H^{1,0}\oplus \underline{\RR}\]
This identification exhibits the stably almost complex (and hence Spin$^c$) structure of $TX$.
The bundle of Heisenberg groups $T_HX$ identifies with
\[ T_HX = P_U \times_{\alpha\times 1} G\]
Here $\alpha\times 1$ denotes the $U(n)$ action on $G=\CC^n\times \RR$.
The induced action on $C^*(G)$ gives the convolution $C^*$-algebra $C^*(T_HX)$ of the groupoid $T_HX$,
\[ C^*(T_HX) = P_U\times_{\alpha\times 1} \,C^*(G)\]
Denote by $\rho$ the action of $U(n)$ on the Bargmann-Fok space $\HH^{BF}=\HH^{BF}_+$,
the completion of the symmetric tensors ${\rm Sym}\,\CC^n$ described above.
We can form the associated bundle of Hilbert spaces 
\[ V^{BF}_+ = P_U\times_\rho \HH^{BF}_+\]
Continuous sections in $V^{BF}_+$ form a Hilbert module over $C(X)$,
and we have the dense subspace
\[ \bigoplus_{j=0}^\infty {\rm Sym}^jH^{1,0}\subset V^{BF}_+\]
The conjugate representation to $\rho$ gives rise to the dual module $V^{BF}_-$ with fiber $\HH^{BF}_-$ and
\[ \bigoplus_{j=0}^\infty {\rm Sym}^jH^{0,1}\subset V^{BF}_-\]
If $\pi$ denotes the Bargmann-Fok representation of $G$ on $\HH^{BF}$,
then for fixed $u\in U(n)$ the operator $\rho(u)\in U(\HH^{BF})$ is an intertwiner of the representations $\pi$ and $\pi\circ (\alpha\times 1)(u)$ of $G$ for the automorphism $(\alpha\times 1)(u)$  of $G$,
\[ \pi((\alpha\times 1)(u).g) = \rho(u)\pi(g)\rho(u)^{-1}\]
This compatibility of $\alpha\times 1$, $\rho$ and $\pi$ (respectively: $U(n)$ acting on $G$, $U(n)$ acting on $\HH^{BF}$, and $G$ acting on $\HH^{BF}$) implies that $\pi$
is well-defined as a representation of a fiber $C^*(G_x)$ of the bundle $C^*(T_HX)$ on the fiber $V_x^{BF}$ of the Hilbert module $V^{BF}_+$.
The representation $\pi$ therefore induces a $\ast$-homomorphism of $C(X)$-algebras,
\[ \pi\;\colon\;C^*(T_HX)\to \KK(V^{BF}_+)\]
Here $\KK(V^{BF}_+)$ denotes the `compact operators' on the $C(X)$ Hilbert module $V^{BF}_+$. 
Our analysis of the structure of $C^*(G)$ carries over to $C^*(T_HX)$.
We obtain a short exact sequence
\[ 0\to I_H \to C^*(T_HX) \to C_0(H^*) \to 0\]
where the ideal $I_H$ can be identified with
\begin{align*}
I_H &\cong 
P_U \times_{1\otimes Ad(\overline{\rho})} [C_0(\RR)\otimes \KK(\HH^{BF}_-)]
\;\oplus\; P_U \times_{1\otimes Ad(\rho)} [C_0(\RR)\otimes \KK(\HH^{BF}_+)]\\
&\cong C_0(\RR)\otimes \KK(V^{BF}_-)
\oplus C_0(\RR)\otimes \KK(V^{BF}_+).
\end{align*}
In particular, the ideal  $I_H$ is Morita equivalent to the commutative algebra $C_0(X\times \RR^\times)$,
and we have an explicit imprimitivity bimodule,
namely the disjoint union of the pull-back of $V^{BF}_+$ to $X\times (0,\infty)$
and the pull-back of $V^{BF}_-$ to $X\times (-\infty,0)$.

\subsection{Inverting the Connes-Thom isomorphism}
In \cite{vE10a} we discussed the crucial role of the isomorphism
\[ \Psi\;\colon\; K^0(T^*X)\to K_0(C^*(T_HX))\]
for our index problem.
The isomorphism $\Psi$ is essentially the Connes-Thom isomorphism in each fiber,
which agrees with the $KK$-equivalence induced by the quantization from $C_0(\Lg^*)$ to $C^*(G)$.
We now obtain a better grip on this isomorphism by analyzing how the quantization behaves when we decompose $C^*(T_HX)$.
Our analysis of $C^*(G)$ carries over  to $C^*(T_HX)$ because of the $U(n)$ equivariance of all the relevant constructions.

Quite generally, a principal $G$ bundle $P$ for a compact group $G$ and compact base $X=P/G$ induces a functor from the category $KK^{G}$ (equivariant $KK$-theory) to the category $KK^{X}$ ($RKK$-theory for $C(X)$-algebras) in the obvious way.
The functor assigns to a $G$-$C^*$-algebra $A$ the $C(X)$-$C^*$-algebra of $G$-equivariant continuous functions 
\[ P(A) = \{ f\;\colon\; P\to A\;\mid\; f\;\text{\rm continuous},\;f(pg)=g^{-1}f(p)\}.\]
Similarly for the morphisms: to a $G$ Hilbert module $\EE$ over $A$ 
the functor assigns the $C(X)$ Hilbert module of $G$-equivariant continuous functions $P\to \EE$, etc.

In this way the $U(n)$-equivariant $KK$-equivalence of short exact sequences for $C^*(G)$ that we derived above implies $KK$-equivalence of the associated sequences for bundles over $X$.
We obtain
\[ \xymatrix {
 0\ar[r]
 & I_H\ar[r]\ar@{.>}[d]^{\cong}_{KK^X}
 & C^*(T_HX)\ar[r]\ar@{.>}[d]^{\cong}_{KK^X}
 & C^*(H)\ar[r]\ar[d]^{\cong}  
 & 0
 \\
 0\ar[r]
 & C_0(H^*\times \RR^\times) \ar[r]
 & C_0(H^*\times \RR^*) \ar[r]
 & C_0(H^*)\ar[r]
 &0.
 }
 \] 
Observe that the bottom sequence is just
\[ 0\to C_0(T^*X\setminus H^*)\to C_0(T^*X)\to C_0(H^*)\to 0,\]
induced by the inclusion $H^*\subset T^*X$.
Following our analysis of $C^*(G)$ we see that the $KK$-equivalence $I_H\sim C_0(H^*\times \RR^\times)$ is a composition of the isomorphism of $C^*$-algebras
\[ I_H \cong C^*(H\times \RR^\times, c_\omega) 
\cong C_0(\RR)\otimes \KK(V^{BF}_-)
\oplus C_0(\RR)\otimes \KK(V^{BF}_+)
\]
with the Morita equivalence
\[  \cdots \;
\sim\; 
C_0(X\times \RR)\oplus C_0(X\times \RR)\]
and, finally, the two Thom isomorphisms $\tau^-\oplus \tau^+$, 
\[ \cdots\;
\sim\;C_0(H^{0,1}\times \RR)\oplus C_0(H^{1,0}\times \RR)\cong C_0(H^*\times \RR^\times).
\]
Here $\tau^+$ and $\tau^-$ denote the Thom classes for the complex bundles $H^{1,0}$ and $H^{0,1}$ respectively.

Let us isolate the maps in $K$-theory that are relevant for our purposes.
We summarize the conclusion of our analysis in the form of a proposition.
\begin{proposition}\label{Crux}
The following diagram commutes
\[ \xymatrix{   K^0(T^*X\setminus H^*) \ar[r] 
              & K^0(T^*X)\ar[dd]_{\Psi}^{Connes-Thom} \\
                K^0(X\times \RR^\times) \ar[u]^{\cup \tau^\pm}_{\text{Thom isom.}}
              & \\
                K_0(I_H) \ar[u]_{\text{Morita equiv.}}^{\otimes V^{BF}_\pm}\ar[r]
              & K^0(C^*(T_HX))    
                }
\]
All vertical maps in the diagram are isomorphisms, and the horizontal maps (induced by inclusions) are surjective.
The vertical arrows on the left are explicitly given by
\[ \xymatrix{   K^0(H^{0,1}\times \RR)\oplus K^0(H^{1,0}\times \RR)  \ar[r]^-{\cong}
              & K^0(T^*X\setminus H^*) \\ 
                K^0(X\times \RR)\oplus K^0(X\times \RR)\ar[r]^-{\cong}\ar[u]^{\cup \tau^-\oplus \cup \tau^+}_{\text{Thom isom.}}
              & K^0(X\times \RR^\times) \ar[u] \\
                K_0(C_0(\RR)\otimes \KK(V^{BF}_-))\;\oplus\; K_0(C_0(\RR)\otimes \KK(V^{BF}_+))\ar[r]^-{\cong}\ar[u]_{\text{Morita equiv.}}^{\otimes V^{BF}_-\oplus \;\otimes V^{BF}_+}
              & K_0(I_H) \ar[u] 
                }
\]
\end{proposition}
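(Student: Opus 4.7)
The plan is to obtain Proposition~\ref{Crux} by applying $K_0$ to the $KK^X$-equivalence of short exact sequences assembled in the preceding paragraphs, and then unpacking the left-hand vertical arrow as an explicit Morita-plus-Thom composition.

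First, I would take $K_0$ of the ladder
\[
\xymatrix{
  0\ar[r] & I_H\ar[r]\ar[d]_{\cong}^{KK^X} & C^*(T_HX)\ar[r]\ar[d]_{\cong}^{KK^X} & C_0(H^*)\ar[r]\ar[d]^{=} & 0\\
  0\ar[r] & C_0(T^*X\setminus H^*)\ar[r] & C_0(T^*X)\ar[r] & C_0(H^*)\ar[r] & 0
}
\]
and obtain a commutative ladder of six-term exact sequences. The middle vertical arrow on $K_0$ is the Connes--Thom isomorphism $\Psi$ by construction, and the right vertical arrow is the identity, so commutativity of the right-hand square in the proposition is automatic once the left vertical arrow is identified correctly.

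Second, I would make the left vertical arrow explicit using the decomposition
\[
I_H \;\cong\; C_0(\RR)\otimes \KK(V^{BF}_-) \;\oplus\; C_0(\RR)\otimes \KK(V^{BF}_+)
\]
established above. The Hilbert $C(X)$-modules $V^{BF}_\pm$ implement a Morita equivalence of each summand with $C_0(X\times\RR)$, and the two resulting copies of $K^0(X\times\RR)$ are matched with the two connected components of $T^*X\setminus H^* = H^{0,1}\times\RR \sqcup H^{1,0}\times\RR$ via the complex Thom isomorphisms $\cup\tau^-$ and $\cup\tau^+$. That this composition really agrees with the $K$-theoretic effect of the $KK^X$-equivalence on $I_H$ is checked by chasing the groupoid quantization $C^*(V\times \RR^*\times [0,1], c_{\varepsilon\omega})$ through the decomposition: for $s\neq 0$ the Bargmann--Fok representation identifies $C^*(V,s\omega)$ with $\KK(\HH^{BF}_\pm)$ depending on the sign of $s$, and passing to $\varepsilon\to 0$ in the rescaled variable $\log|s|$ yields precisely the Bott generator for $V^{1,0}$ or $V^{0,1}$. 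All steps are $U(n)$-equivariant, so the fiberwise picture globalizes over $X$ via $P_U$.

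Finally, surjectivity of the horizontal arrows follows from the six-term exact sequence attached to the lower row: the splitting $T^*X \cong H^*\oplus\underline{\RR}$ coming from the co-orientation, together with the vanishing of the restriction $K^i_c(\RR)\to K^i(\mathrm{pt})$ for both $i=0,1$, shows that $K^0(T^*X)\to K^0(H^*)$ is zero, hence by exactness $K^0(T^*X\setminus H^*)\to K^0(T^*X)$ is surjective; surjectivity in the top row is then transported through the $KK^X$-equivalence. The main obstacle is the compatibility check in the second step --- matching the abstractly defined $\Psi$ with the concrete Morita-plus-Thom factorization. This hinges on the representation-theoretic fact recorded earlier, namely that the $U(n)$-action on the Bargmann--Fok space lifts to a genuine (not merely projective) unitary representation, which is precisely what makes the Dixmier--Douady class of the bundle $\KK(V^{BF}_\pm)$ vanish and hence the bundle-level Morita equivalence available over $X$.
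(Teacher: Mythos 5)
Your proposal is correct and mirrors the paper's own proof: commutativity of the diagram is treated as a formal consequence of the $KK^X$-equivalence of short exact sequences already assembled in the preceding subsections, together with the explicit identification of the left-hand $KK^X$-equivalence as Morita equivalence followed by the complex Thom isomorphisms, so the only content the paper actually checks in the proof is surjectivity of the horizontal arrows. The paper obtains surjectivity directly by observing that the open inclusion of each connected component of $T^*X\setminus H^*$ into $T^*X$ induces an isomorphism on compactly supported $K$-theory; your route through the six-term exact sequence and the vanishing of the restriction $K^0(T^*X)\to K^0(H^*)$ reaches the same conclusion by an equivalent (slightly more indirect) argument.
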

\begin{proof}
We only need to check the claim about surjectivity of the maps represented by the horizontal arrows.
The inclusion of each connected component of $T^*X\setminus H^*$ into $T^*X$ is a homotopy equivalence.
Therefore $K^0(T^*X\setminus H^*)\to K^0(T^*X)$ is surjective.
Since all vertical maps are isomorphisms, the same is true for the map $K_0(I_H)\to K^*(C^*(T_HX))$.
\end{proof}

\subsection{Noncommutative Poincar\'e duality}\label{PD_geometric}

\begin{definition}\label{map_b}
The {\em noncommutative Poincar\'e duality} map 
\[ b\;\colon K_0(C^*(T_HX))\to K^{top}_0(X)\]
is defined by choosing an arbitrary lift of an element in $K_0(C^*(T_HX))$ to an element in $K_0(I_H)$, followed by the composition of maps
\[ K_0(I_H)\cong K^1(X)\oplus K^1(X) \stackrel{\cap [X^-]\oplus \cap [X^+]}{\longrightarrow} K^{top}_0(X)\oplus K^{top}_0(X)\to K^{top}_0(X).\]
From left to right, these maps are (1) Morita equivalence induced by the Bargmann-Fok Hilbert modules $V^{BF}_-$ and $V^{BF}_+$, (2) Poincar\'e duality for the two natural Spin$^c$ structures on $X$ and (3) addition of $K$-homology classes.
\end{definition}
\begin{remark}
Let us repeat here that the implicit isomorphism
\[ K^0(X\times \RR^\times)\cong K^0(X\times \RR)\oplus K^0(X\times \RR) \cong K^1(X)\oplus K^1(X)\]
is chosen such that the identifications $(-\infty,0)\approx \RR$ and $(0,\infty)\approx \RR$ are given by the map $s\mapsto \log{|s|}$,
i.e., each component of $\RR^\times$ is oriented from $0$ to $\pm\infty$.
\end{remark}
\begin{remark}
The $K$-theory class of the symbol $[\sigma_H(P)]\in K_0(C^*(T_HX))$ only depends on the principal symbol of $P$ in the Heisenberg calculus, and is uniquely defined by $P$, independent of arbitrary choices. 
In section \ref{computing_b} we will see that lifting $[\sigma_H(P)]$ to $K_0(I_H)$ amounts to taking a distribution $\sigma_H(P)\in \EE'(T_HX)$ that represents a {\em full} Heisenberg symbol for $P$, and possibly perturbing it by a compactly supported smooth function in $C_c^\infty(T_HX)$ in order to give it an extra property.
Such a perturbation does not affect the {\em principal} Heisenberg symbol.
The choice of such a perturbation is always possible, but not uniquely determined by $P$.
However, the equivalence class of $b(\sigma_H(P))$ in the $K$-homology group $K^{top}_0(X)$ does not depend on the choice of this perturbation,
and so the map $b$ is well-defined.
The proof of this fact is implicit in the proof of the following theorem.
\end{remark}

\begin{lemma}\label{Dirac}
The following diagram commutes,
\[ \xymatrix{    K^0(X\times \RR)\ar[r]^-{\cup\tau_+}\ar[d]^{\cong}
               & K^0(T^*X) \ar[d]^{c}\\
                 K^1(X) \ar[r]_{\cap [X^+]}
               & K^{top}_0(X) 
                }
\]
Here the left vertical arrow is the suspension isomorphism in $K$-theory;
the upper horizontal arrow is the Thom isomorphism resulting from the direct sum decomposition $T^*X = H^{1,0}\oplus \underline{\RR}$;
the lower horizontal arrow is Poincar\'e duality on the Spin$^c$ manifold $X$,
i.e., is cap product with the $K$-homology fundamental cycle $[X^+]$;
and the right vertical arrow is the ``clutching construction'', i.e., cap product with the $K$-homology fundamental cycle of the Spin$^c$ manifold $T^*X$.
\end{lemma}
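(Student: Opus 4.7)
The lemma expresses a standard compatibility between the Thom isomorphism for the complex Spin$^c$ bundle $H^{1,0}$ and Poincar\'e duality on the Spin$^c$ manifold $X^+$. The plan is to track a class $\xi\in K^0(X\times\RR)$ through both paths of the square and recognize the two outputs as the same geometric $K$-cycle on $X$.

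First I would identify the suspension isomorphism on the left column with the clutching description: write $\xi=[\sigma]$ for an automorphism $\sigma\colon F\to F$ of a vector bundle on $X$. The bottom-right corner, $[\sigma]\cap[X^+]$, is then represented by the geometric $K$-cycle $(X\times S^1,\nu(\sigma),\pi_1)$, where $X\times S^1$ carries the product Spin$^c$ structure from $X^+$ and $S^1$ and $\nu(\sigma)$ is the clutching bundle described in Section~2.

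For the top-right path, the Thom class $\tau_+$ is represented by the fiberwise Koszul/Dolbeault complex on $H^{1,0}$, while $c=\cap[T^*X]$ is realized analytically by the Spin$^c$ Dirac operator on $T^*X$. The core identity I would invoke is the multiplicativity of Poincar\'e duality along a Spin$^c$ vector bundle: for any Spin$^c$ manifold $N$, any Spin$^c$ vector bundle $W\to N$ with total space $E$ and projection $p\colon E\to N$,
\[ p_*\bigl((p^*\eta\cup\tau_W)\cap[E]\bigr)\;=\;\eta\cap[N]\qquad \text{for all } \eta\in K^*(N). \]
Applying this with $N=X\times\RR$ (Spin$^c$ via $X^+$ and $\RR$) and $W=p^*H^{1,0}$, so that $E=T^*X$, collapses the top-right path to $\xi\cap[X\times\RR]$ pushed to $X$. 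The standard identification of the suspension isomorphism $K^0(X\times\RR)\cong K^1(X)$ with cap product by the $K$-homology fundamental class of $X\times\RR$ then reduces this further to $[\sigma]\cap[X^+]$, matching the bottom-right corner.

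The main obstacle is establishing the multiplicativity identity above in the non-compact setting, since neither $T^*X$ nor $X\times\RR$ is closed. I would handle this by working in analytic $K$-homology, where the identity becomes an instance of the associativity of the Kasparov product together with the factorization of the Dirac operator on the total space $E$ as the (graded) Kasparov product of the Dirac operator on $N$ with the fiberwise Dirac/Koszul operator representing $\tau_W$. This factorization is standard for Spin$^c$ vector bundles, and in our setting handles the $H^{1,0}$ factor and the $\underline{\RR}$ factor of $T^*X=H^{1,0}\oplus\underline{\RR}$ simultaneously. Tracking the explicit Koszul-to-clutching construction for $\tau_+$ then shows that the geometric $K$-cycle produced by the top-right path coincides with $(X\times S^1,\nu(\sigma),\pi_1)$, proving the lemma.
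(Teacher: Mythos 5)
Your proof is correct and rests on the same two pillars as the paper's argument: multiplicativity of the Thom class along the decomposition $T^*X=H^{1,0}\oplus\underline{\RR}$, and the Dirac--dual-Dirac identity $\beta\#\alpha=1$ that identifies the fundamental class of the total space of a Spin$^c$ vector bundle with $\alpha\#[\text{base}]$. The difference is largely organizational. The paper inserts a diagonal arrow --- the Thom isomorphism for $T^*X$ over the \emph{compact} base $X$ --- and then checks two triangles, one by Thom-class multiplicativity and one by $\beta\#[T^*X]=\beta\#\alpha\#[X^+]=[X^+]$, so Dirac--dual-Dirac is only ever invoked over $X$ itself. You instead state a single ``multiplicativity of Poincar\'e duality along a Spin$^c$ vector bundle'' identity and apply it first with the \emph{non-compact} base $N=X\times\RR$ and $W$ the pullback of $H^{1,0}$, and then in effect a second time (this is what the ``standard identification of the suspension isomorphism with cap product by $[X\times\RR]$'' unwinds to) to pass from $X\times\RR$ down to $X$. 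You correctly flag the non-compactness of the intermediate base as the point requiring care, and the proposed remedy --- factoring the Dirac operator on the total space as a Kasparov product of the fiberwise Dirac with the Dirac on the base, then invoking associativity --- is sound; indeed, it is precisely what the paper's one-line computation $\beta\#[T^*X]=\beta\#\alpha\#[X^+]=[X^+]$ encodes. Both routes prove the lemma from the same facts; the paper's diagonal-arrow decomposition just keeps the fundamental-class manipulations over a closed manifold and thereby sidesteps the issues you had to address explicitly.
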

\begin{proof}
Commutativity of the diagram follows from commutativity of the two triangles that appear  if we introduce the diagonal arrow which is the Thom isomorphism for the Spin$^c$ vector bundle $T^*X$ on $X$,
\[ \xymatrix{    K^0(X\times \RR)\ar[r]^-{\cup\tau_+}
               & K^0(T^*X) \ar[d]^{c}\\
                 K^1(X) \ar[r]_{\cap [X^+]} \ar[ur]\ar[u]^{\cong}
               & K^{top}_0(X) 
                }
\]
Commutativity of the two triangles is standard algebraic topology, which we now briefly indicate.
The upper triangle commutes because the Thom class for $T^*X = H^{1,0}\oplus\underline{\RR}$ is the product of the Thom classes used for the left vertical arrow and the upper horizontal arrow.
For commutativity of the lower triangle, use the following notation: 
$\beta \in KK^1(C(X), C_0(T^*X))$ is the Thom class for $T^*X$,
$\alpha\in KK^1(C_0(T^*X), C(X))$ is the $KK$-element given by the family of Dirac operators for the fibers of $T^*X$.
The Dirac-dual Dirac identity in this context is the assertion that the Kasparov product $\beta\# \alpha$ is the unit element of the ring $KK^0(C(X), C(X))$,
\[ \beta\# \alpha = 1\in KK^0(C(X), C(X)).\]
The total space of $T^*X$ is itself a (non-compact) Spin$^c$ manifold,
and its $K$-homology fundamental class $[T^*X]$ is the Kasparov product of $\alpha$ and $[X^+]$,
\[ [T^*X] = \alpha\# [X^+] \in KK^0(C_0(T^*X), \CC).\]
It now follows that 
\[ \beta\# [T^*X]= \beta\# \alpha \# [X^+] = [X^+].\]
which gives commutativity of the lower triangle.
\end{proof}

\begin{theorem}\label{Poincare}
The following diagram commutes
\[ \xymatrix{    K^0(T^*X) \ar[r]^-{\Psi}\ar[d]^{c} 
               & K_0(C^*(T_HX)) \ar[ld]^{b}  \\
                 K_0^{top}(X)
               &  }
\]
where $b$ is as in Definition \ref{map_b},
and $c$ is the ``clutching construction'' introduced in \cite[section 22]{BD80}.
\end{theorem}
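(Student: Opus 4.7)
The plan is to reduce the claim to Proposition~\ref{Crux} plus two applications of Lemma~\ref{Dirac} (one for each co-orientation), exploiting the fact that the entire diagram of Proposition~\ref{Crux} is built out of isomorphisms on the vertical arrows and surjections on the horizontal arrows. The strategy is: lift an element of $K^0(T^*X)$ to $K^0(T^*X\setminus H^*)$, split it into its two components, and identify $c$ and $b\circ\Psi$ summand by summand.

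More concretely, pick $\xi\in K^0(T^*X)$. By the surjectivity assertion in Proposition~\ref{Crux}, we can lift $\xi$ to a class $\tilde\xi\in K^0(T^*X\setminus H^*)\cong K^0(H^{0,1}\times\RR)\oplus K^0(H^{1,0}\times\RR)$, and chasing up the left column of Proposition~\ref{Crux} writes $\tilde\xi = \eta^-\cup\tau^- + \eta^+\cup\tau^+$ where $\eta^\pm\in K^0(X\times\RR)\cong K^1(X)$. Definition~\ref{map_b} of the map $b$, read off the bottom row of Proposition~\ref{Crux}, then gives
\[ b(\Psi(\xi)) \;=\; \eta^-\cap [X^-] \;+\; \eta^+\cap [X^+]. \]
On the other side of the triangle, naturality of the cap product and the fact that each component of $T^*X\setminus H^*$ is a homotopy-equivalent open subset of $T^*X$ show that $c$ factors through the inclusion $K^0(T^*X\setminus H^*)\to K^0(T^*X)$ and is additive on the two components, so $c(\xi) = c(\eta^-\cup\tau^-) + c(\eta^+\cup\tau^+)$. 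Lemma~\ref{Dirac} immediately gives $c(\eta^+\cup\tau^+) = \eta^+\cap[X^+]$.

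The main obstacle is therefore the analogue of Lemma~\ref{Dirac} for the minus component, namely the identity $c(\eta^-\cup\tau^-) = \eta^-\cap[X^-]$. The proof should be word-for-word the same as Lemma~\ref{Dirac}, but with the decomposition $T^*X = H^{0,1}\oplus\underline{\RR}$ used in place of $T^*X = H^{1,0}\oplus\underline{\RR}$: this alternative almost-complex structure on $T^*X$ is conjugate to the first and induces on $X$ precisely the Spin$^c$ structure corresponding to $[X^-]$, while on $T^*X$ itself it picks out the $K$-homology fundamental class obtained as the Kasparov product of the families-of-Dirac-operators class with $[X^-]$. The only point requiring care is orientation bookkeeping on the two half-lines of $\RR^\times$: the convention of Definition~\ref{map_b}, in which each half-line is oriented outward from the origin via $s\mapsto\log|s|$, is exactly what is needed to match the orientations of the two Thom classes $\tau^\pm$ used in Proposition~\ref{Crux} with the two co-orientations of the contact distribution. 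Once the minus-version of Lemma~\ref{Dirac} is in hand, summing the two identifications yields $c(\xi) = b(\Psi(\xi))$, which is the desired commutativity.
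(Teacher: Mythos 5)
Your argument reproduces the paper's proof: both reduce to Lemma~\ref{Dirac} for the $X^+$ component and its conjugate analogue for $X^-$ (with the orientation bookkeeping on $\RR^\times$, via $s\mapsto\log|s|$, that the paper likewise flags as the delicate point), and then combine the two components via Proposition~\ref{Crux}. The only difference is presentational --- you chase a single lifted element through the diagram, whereas the paper assembles one large commutative diagram and compares it with Proposition~\ref{Crux}.
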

\begin{proof}
If we identify $X\times (0,\infty)\cong X\times \RR$, then  Lemma \ref{Dirac} gives commutativity of 
\[ \xymatrix{    K^0(X\times (0,\infty))\ar[r]^-{\cup\tau_+}\ar[d]^{\cong}
               & K^0(T^*X) \ar[d]^{c}\\
                 K^1(X) \ar[r]_{\cap [X^+]}
               & K^{top}_0(X) 
                }
\]
Similarly, for the conjugate Spin$^c$ structure on $X$ we have
\[ \xymatrix{    K^0(X\times (-\infty, 0))\ar[r]^-{\cup\tau_-}\ar[d]^{\cong}
               & K^0(T^*X) \ar[d]^{c}\\
                 K^1(X) \ar[r]_{\cap [X^-]}
               & K^{top}_0(X) 
                }
\]
The {\em sign} of the isomorphism $K^0(X\times (0,\infty))\cong K^1(X)$ depends on the orientation of $(0,\infty)$, i.e., on the orientation of the normal bundle $N=X\times \RR$,
which in turn depended on the choice of contact form.
In order to make the second diagram commute we must orient $(-\infty,0)$ from $0$ to $-\infty$.
This is because the reversed orientation of the normal bundle $N$ is built into the definition of the fundamental cycle $[X^-]$, and also affects the choice of the isomorphism
\[ K^0(X\times (-\infty, 0))\cong K^0(X\times \RR)\cong K^1(X).\]  
Combining the two components of $X\times \RR^\times$, we obtain a single diagram
\[ \xymatrix{    K^0(X\times (-\infty,0))\oplus K^0(X\times (0,+\infty))\ar[r]^-{\cup\tau^-\oplus \cup\tau^+}\ar[d]^{\cong}
               & K^0(T^*X\setminus H^*) \ar[r]
               & K^0(T^*X) \ar[d]^{c}\\
                 K^1(X)\oplus K^1(X) \ar[r]_{\cap [X^-]\oplus \cap [X^+]}
               & K^{top}_0(X) \oplus K^{top}_0(X)\ar[r]
               & K^{top}_0(X) 
                }
\]
Comparing this diagram with Corollary \ref{Crux}  proves the proposition.

\end{proof}

\section{The index theorem as a commutative triangle}\label{AnalyticTriangle}

The symbol $\sigma_H(P)$ of a hypoelliptic operator in the Heisenberg calculus naturally determines an element in $K_0(C^*(T_HX))$.
In this section we prove that the Poincar\'e dual $b(\sigma_H(P))\in K_0^{top}(X)$ of the Heisenberg symbol is the desired $K$-cycle $\mu^{-1}(P)$
\[ \mu^{-1}(P) = b(\sigma_H(P))\]
For convenience of the reader, we start with a brief sketch of the main features of the Heisenberg calculus.
References are \citelist{\cite{Ta84} \cite{BG88} \cite{Ep04}}.

\subsection{The Heisenberg filtration}

Consider a differential operator $P$ on a smooth manifold $X$,
given in local coordinates by an expression
\[ P = \sum_{|\alpha|\le d} a_{\alpha} \partial^\alpha, \qquad \alpha=(\alpha_1, \dots, \alpha_n),\; |\alpha| = \sum \alpha_j,\; \partial^\alpha = \Pi (\partial/\partial x_j)^{\alpha_j},\]
where the coefficients $a_\alpha$ are smooth functions.
The highest order part of $P$ is not well-defined as a differential operator on $X$.
The algebra of differential operators is only filtered, not graded.
But the highest order part at a point $x\in X$,
\[ P_x = \sum_{|\alpha|=d} a_{\alpha}(x) \partial^\alpha,\]
can be interpreted as a constant coefficient operator on the tangent fiber $T_xX$. 
As such it is well-defined and independent of coordinate choices. 

The root of the {\em Heisenberg calculus} is a simple but important idea, 
proposed by Gerald Folland and Elias Stein in \cite{FS74},
to equip the algebra of differential operators with an alternative filtration.
The filtration on the algebra of differential operators proposed by Folland and Stein is generated by a filtration on the Lie algebra of vector fields, defined as follows: 
All vector fields in the direction of the contact hyperplane bundle $H$ have order one, as usual, but {\em any} vector field that is not everywhere tangent to $H$ is given order {\em two}. 
For example, a sublaplacian $\Delta_H$ has order two, as in the classical calculus.
But in the Heisenberg calculus the Reeb vector field $T$ is also a second order operator. 

What sort of object is the {\em highest order part} of a differential operator if we adopt this Heisenberg filtration?
Abstractly, for any filtered algebra, the notion of `highest order part' refers to an element in the associated graded algebra.
Observe that in the associated graded algebra smooth functions $f$ commute with all vector fields $W$, because the commutator $[W,f]=W.f$ is of order zero. 
This implies that elements in the graded algebra can be {\em localized} at points $x\in X$.
Therefore, just as in the classical case, the highest order part of a differential operator $P$ will consist of a family of operators $P_x$ parametrized by $x\in X$.
But they are not exacty constant coefficient operators here.
Instead, they are translation invariant operators for a certain nilpotent group structure on $T_xX$.

In all that follows, $X$ denotes a closed contact manifold with a contact hyperplane bundle $H\subset TX$.
We denote by $N=TX/H$ the quotient line bundle, and assume that there exists a global contact form $\theta$, so that we can identify $N=X\times \RR$.
Also, the Reeb vector field provides us with a section $N\subset TX$,
so that we can identify $T_xX=H_x\times \RR$.
Also, $d\theta$ restricted to $H_x$ is a symplectic form
and the tangent space $G_x=H_x\times \RR$ is then a Heisenberg group with group operation
\[ (v,t)\cdot (v', t') = (v+v', t+t' -\frac{1}{2}d\theta(v,v')),\qquad v,v'\in H_x, \; t, t'\in \RR.\]
Algebraically, the smooth groupoid $T_HX$ is the disjoint union of Heisenberg groups  
\[ T_HX = \bigsqcup_{x\in X} G_x.\]
The highest order part $\{P_x, x\in X\}$ of a differential operator $P$ in the Heisenberg calculus can be interpreted as a smooth family of translation invariant operators $P_x$ on the Heisenberg groups
$G_x=H_x\times \RR \cong T_xX$,
or, equivalently, a right invariant differential operator on the Lie groupoid $T_HX$.

For example, for the second order operators $P_\gamma = \Delta_H+i\gamma T$ that are locally represented as
\[ P = -\sum_{j=1}^{2n} W_j^2 + i\gamma T\]
freezing coefficients at a point $x\in X$ results in
\[ P_x = -\sum_{j=1}^{2n} W_j(x)^2 + i\gamma(x) T(x)\]
This formal polynomial in the tangent vectors $W_j(x)$, $T(x)\in T_xX$ should be interpreted as an element in the universal enveloping algebra $\UU(\Lg_x)$ of the Heisenberg Lie algebra $\Lg_x= H_x\oplus \RR$,
or, equivalently, as an invariant differential operator on the group $G_x$. 
In other words, the vector $W_j(x)$ should {\em not} be identified with a vector field on $T_xX$ that is translation invariant for the usual vector space structure on $T_xX$, but rather with a vector field on $T_xX$ that is translation invariant for the Heisenberg group structure on $T_xX$.

\subsection{Heisenberg pseudodifferential calculus}

When we restrict attention to differential operators
the Heisenberg calculus is fairly straightforward.
Constructing the corresponding $\ZZ$-filtered algebra of Heisenberg {\em pseudodifferential} operators requires more work (see \citelist{\cite{Ta84} \cite{BG88} \cite{Ep04}}).
In this section we sketch one possible approach.

Just like the classical pseudodifferential calculus, the Heisenberg algebra consists of pseudolocal continuous linear operators 
\[ P\;\colon\; C^\infty(X)\to C^\infty(X). \]
In other words, they are operators with a Schwartz kernel $k(x,y)$ that is a smooth function off the diagonal in $X\times X$.
The Heisenberg calculus (as well as its filtration) is defined by asymptotic expansions of the singularity of $k$ in the direction transversal to the diagonal.

Choose a connection $\nabla$ on $TX$ that preserves the distribution $H$
(this is an important technical condition).
Consider the exponential map ${\exp}^\nabla$ associated to this connection $\nabla$,
\[ h\;\colon\; T_HX\to X\times X\;\colon\; (x, v)\mapsto ({\rm exp}^\nabla_x(v), x).\]
The map $h$ is a local diffeomorphism of a neighborhood of the zero section of $T_HX$ with a neighborhood of the diagonal in $X\times X$.
We pull back the distribution $k$ to the groupoid $T_HX$ by the map $h$,
and then chop it by a cut-off function.
Let $\phi$ be an arbitrary smooth function on $T_HX$
that is compactly supported and equals $1$ in a neighborhood of the zero section.
At a point $x\in X$ let $k_x$ be the distribution on the Heisenberg group $G_x=T_xX$ defined by
\[ k_x(v) = \phi(x,v) \cdot k(h(x, -v)) , \qquad v\in T_xX.\]
The smooth family of distributions $\{k_x, x\in X\}$ defines a compactly supported distribution on $T_HX$. 
The action of the operator $P$ with kernel $k$
is approximated (in a neighborhood of the point $x\in X$) by convolution with the compactly supported distribution $k_x$ on the Heisenberg group $G_x$,
in a sense that can be made precise.
This was the basic idea introduced by Folland and Stein in \cite{FS74}.
 
Because the kernel $k$ is smooth off the diagonal, each distribution $k_x$ is regular, i.e., it is a smooth function when restricted to $G_x\setminus \{0\}$.
We say that $P$ is a pseudodifferential operator of order $d$ if
each distribution $k_x$ has an asymptotic expansion near $0\in G_x$,
\[ k_x \sim k_x^0+k_x^1+k_x^2+\cdots\]
The asymptotic expansion should be interpreted in the usual way: the remainder $k-\sum_{j=0}^N k^j$ becomes more regular as $N$ increases,
and the entire expansion determines $k_x$ modulo compactly supported smooth functions $C_c^\infty(T_HX)$.
The defining feature of the Heisenberg calculus is that the terms $k_x^j$ in this expansion must be homogeneous with respect to the `parabolic' dilation structure
of the Heisenberg group $G_x=T_xX=H_x\times \RR$,
\[\delta_s\;\colon\; H_x\times \RR \to H_x\times \RR\;\colon\; (v,t)\mapsto (sv, s^2t),\qquad s>0.\]
Formally, the term $k_x^j$ in the expansion must satisfy 
\[ k_x^j(\delta_s v) = s^{j-(2n+2)}\,k_x^j(v).\]
This notion of homogeneity based on the dilations $\delta_s$ 
corresponds to the Heisenberg filtration for differential operators that assigns order one to vector fields tangent to $H$ and order two to vector fields transveral to $H$.
But we now obtain a filtered algebra of pseudodifferential operators that differs from the classical pseudodifferential algebra.

The existence of an asymptotic expansion is independent of the choice of exponential map $\exp^\nabla$ and cut-off function $\phi$,
as long as the exponential map satisfies the technical condition that it preserves sections in $H$.
We denote the operator of convolution with the distribution $k_x$ by $P_x$,
and we write
\[ \sigma_H(P) = \{P_x, x\in X\}.\]
We regard $\sigma_H(P)$ as an element in the convolution algebra $\EE'(T_HX)$ of compactly supported distributions on the groupoid $T_HX$,
and we can think of it as the `full symbol' of $P$.
While the value of $\sigma_H(P)$ depends on the choice of exponential $\exp^\nabla$ and cut-off $\phi$,
the highest order part $\sigma^d_H(P) = \{k^0_x, x\in X\}$ in the asymptotic expansion of $\sigma_H(P)$ is invariantly defined as a smooth family of convolution operators on $T_HX$, independent of $\exp^\nabla, \phi$.

The collection of all Heisenberg pseudodifferential operators on $X$ is a $\ZZ$-filtered algebra: if $P, Q$ are Heisenberg pseudodifferential operators of order $a, b$ respectively, then $PQ$ is a Heisenberg pseudodifferential operator of order $a+b$.
Moreover, the leading term in the asymptotic expansion of $\sigma_H(PQ)$
is obtained by convolution of the leading terms in the expansions of $\sigma_H(P)$ and $\sigma_H(Q)$, i.e.,
\[ \sigma^{a+b}_H(PQ) = \sigma^a_H(P)\,\ast\,\sigma_H^b(Q).\]
As usual,
if the leading term $\sigma_H^d(P)$ of an order $d$ symbol $\sigma_H(P)$ is invertible (in the convolution algebra),
then we can derive an asymptotic expansion for an inverse of the full symbol $\sigma_H(P)$ modulo $C_c^\infty(T_HX)$.
This implies that $P$ itself is invertible in the Heisenberg algebra modulo smoothing operators, and hence a hypoelliptic Fredholm operator on $X$.
It is precisely for such operators that we prove our index formula.

\subsection{The Heisenberg symbol in $K$-theory}\label{K-symbol}

In \cite[Definition 15]{vE10a} we constructed a $K$-theoretic symbol in $K_0(C^*(T_HX))$ for a hypoelliptic differential operator $P$ in the Heisenberg calculus.
As we have seen, the principal symbol $\sigma^d_H(P)$ is then a family of differential operators $P_x$ on $G_x$, where each $P_x$ is obtained from $P$ by a simple procedure of `freezing coefficients' at $x\in X$.
The result is independent of a choice of exponential map $T_HX\to X\times X$ or cut-off function $\phi$.
In fact, the construction given in \cite{vE10a} makes no reference to the Heisenberg pseudodifferential calculus at all.
In \cite{vE10b} we gave an alternative definition that works for order zero operators in the Heisenberg calculus.

Perhaps the easiest way to define the $K$-theory element 
\[ [\sigma_H(P)]\in K_0(C^*(T_HX))\]
is to follow the general ideas set out by Connes in \cite[II.9.$\alpha$]{Co94}.
The resulting construction works for arbitrary hypoelliptic operators in the Heisenberg algebra. 

Compactly supported smooth functions $C_c^\infty(T_HX)$ form a two-sided ideal in the convolution algebra $\EE'(T_HX)$ of compactly supported distributions on the groupoid $T_HX$.
The asymptotic expansion for products of full symbols in the Heisenberg calculus implies that if the principal symbol of an operator $P$ is invertible, then the {\em full} symbol $\sigma_H(P)$ has an inverse modulo the ideal $C_c^\infty(T_HX)$.
By a  general argument explained in \cite[II.9.$\alpha$]{Co94}, 
this implies that $\sigma_H(P)$ has an `index' in the $K$-theory group $K_0(C^*(T_HX))$.
First, the full symbol $\sigma_H(P)$ determines an element in {\em algebraic} relative $K$-theory,
\[ [\sigma_H(P)]\in K_0(\EE'(T_HX), C_c^\infty(T_HX)).\]
By excision we have
\[ K_0(\EE'(T_HX), C_c^\infty(T_HX))\cong K_0(C_c^\infty(T_HX)),\]
and, finally, the inclusion of rings $C_c^\infty(T_HX)\subset C^*(T_HX)$ maps the element $[\sigma_H(P)]$ to the $K$-theory group $K_0(C^*(T_HX))$ (which is the same in algebraic and $C^*$-algebraic $K$-theory).
To make this more explicit, if $Q$ is a parametrix for $P$ (an inverse modulo smoothing operators), then the $K$-theory element
\[ [\sigma_H(P)]=[e]\ominus [e_0]\in K_0(C^*(T_HX))\]
is represented by the formal difference of the idempotents
\[e=\left(\begin{array}{cc}S_0^2&S_0\sigma_H(Q)\\S_1(1+S_1)\sigma_H(P)&1-S_1^2\end{array}\right),\;
e_0=\left(\begin{array}{cc}0&0\\0&1\end{array}\right),\]
where $S_0=1-\sigma_H(Q)\sigma_H(P)$, $S_1=1-\sigma_H(P)\sigma_H(Q)$.
(See \cite[II.9.$\alpha$]{Co94} for the details of the general construction.)

\subsection{The ``Choose an operator'' maps}
 
In this section we prove the analog of the Poincar\'e duality Theorem \ref{Poincare} for analytic $K$-homology $KK(C(X),\CC)$.
The proof of this theorem closely follows the steps of \cite[section 3.7]{vE10a},
generalizing everything from a statement in $K$-theory to a statement about the functor $KK(C(X),-)$.
We refer the reader to \cite{vE10a} for details of the proof in $K$-theory, and indicate here how the argument can be stengthened to prove Theorem \ref{choose}.
\vskip 6pt 

The Poincar\'e duality map
\[ {\rm Op}_e\;\colon\; K^0(T^*X)\to KK^0(C(X),\CC)\]
is defined as follows.
Given $\sigma\in K^0(T^*X)$, choose an elliptic zero order pseudodifferential operator $P$ on $X$ such that in $K^0(T^*X)$,
\[ [\sigma(P)]=\sigma.\]
Then ${\rm Op}_e(\sigma)$ is the element in $KK^0(C(X),\CC)$ determined by $P$.
The non-trivial point here is that if two elliptic pseudodifferential operators $P, Q$ have $[\sigma(P)]=[\sigma(Q)]$ in $K^0(T^*X)$, then $P, Q$ determine the same element in $KK^0(C(X),\CC)$---i.e., a homotopy of symbols can be lifted to a homotopy of operators.
(See \cite{AS1}, \cite[section 23]{BD80}.)
Instead of attempting a direct construction for the Heisenberg
pseudodifferential calculus,
we will, in the proof below, construct a map
\[ {\rm Op}_H\;\colon\; K_0(C^*(T_HX))\to KK(C(X),\CC),\]
and then prove a posteriori that if $P$ is an order zero operator in the Heisenberg calculus with principal symbol $\sigma_H(P)$,
then
\[ {\rm Op}_H([\sigma_H(P)]) = [P].\]
This justifies referring to the map ${\rm Op}_H$ as the ``choose a hypoelliptic operator'' map for the Heisenberg calculus.

\begin{theorem}\label{choose}
There exists a map 
\[ {\rm Op}_H\;\colon\; K_0(C^*(T_HX))\to KK^0(C(X),\CC),\]
such that  $(i)$ if $P$ is an order zero operator in the Heisenberg algebra with invertible principal symbol $\sigma_H(P)$ then ${\rm Op}_H(\sigma_H(P))=[P]$ in $KK^0(C(X), \CC)$,
and $(ii)$ the following diagram commutes,
\[ \xymatrix{    K^0(T^*X) \ar[r]^-{\Psi}\ar[d]^{\rm Op_{e}} 
               & K_0(C^*(T_HX)) \ar[ld]^{\rm Op_{H}}  \\
                 KK^0(C(X), \CC)
               &  }
\]
\end{theorem}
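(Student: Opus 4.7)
The plan is to mirror the construction of the K-theoretic index map in [vE10a, Section 3.7] but carry along the $C(X)$-bimodule structure so that the output lands in $KK^0(C(X), \CC)$ rather than in $\ZZ = K_0(\KK)$. The central geometric object is the \emph{Heisenberg tangent groupoid} $\mathcal{G}$ over $X \times [0,1]$, whose fibers are $T_HX$ at $t = 0$ and the pair groupoid $X \times X$ for $t > 0$, with a smooth structure at $t = 0$ incorporating the parabolic rescaling $(v,s) \mapsto (tv, t^2 s)$ on $H_x \times \RR$. The corresponding $C^*$-algebra $C^*(\mathcal{G})$ is a $C([0,1])$-algebra with evaluations $\mathrm{ev}_0 \colon C^*(\mathcal{G}) \to C^*(T_HX)$ and $\mathrm{ev}_1 \colon C^*(\mathcal{G}) \to \KK(L^2(X))$, and the ideal $\ker \mathrm{ev}_0 = C^*(\mathcal{G}|_{(0,1]})$ is $KK$-contractible (a continuous field over the contractible interval $(0,1]$ with fibers $\KK(L^2(X))$), so $[\mathrm{ev}_0]$ is a $KK$-equivalence.

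I would define $\mathrm{Op}_H$ as the composition of $[\mathrm{ev}_0]^{-1}$ with $(\mathrm{ev}_1)_*$, interpreted at the level of Kasparov bimodules so as to retain the $C(X)$-structure. Concretely, given $x \in K_0(C^*(T_HX))$, choose a full Heisenberg symbol $\sigma \in \EE'(T_HX)$ representing $x$ (invertible modulo $C_c^\infty(T_HX)$, cf.\ Section \ref{K-symbol}); quantize it via the groupoid to a Heisenberg pseudodifferential operator on $X$; and form the associated Kasparov bimodule $(L^2(X)^{\oplus n}, F)$ using the bounded transform. Well-definedness and property (i) rest on the same mechanism: two full symbols representing the same $x$ differ by an element of $C_c^\infty(T_HX)$, whose $\mathrm{ev}_1$-image is a compact operator with compact $C(X)$-commutators, hence gives a Kasparov-equivalent bimodule; and for $x = [\sigma_H(P)]$ with $P$ of order zero, the pair $(\sigma_H(P), P)$ by construction extends to a continuous section of $C^*(\mathcal{G})$, so $\mathrm{Op}_H([\sigma_H(P)]) = [P]$ tautologically.

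For commutativity of the triangle with $\mathrm{Op}_e$, the Connes-Thom isomorphism $\Psi$ (Section \ref{NCT}) was obtained from an adiabatic deformation in an auxiliary parameter $\varepsilon \in [0,1]$ between $C_0(T^*X)$ and $C^*(T_HX)$. Combining this adiabatic parameter with the tangent groupoid parameter $t$ yields a two-parameter $C^*$-algebra whose corner evaluations realize both $\mathrm{Op}_e \circ \Psi^{-1}$ (at $\varepsilon = 0$) and $\mathrm{Op}_H$ (at $\varepsilon = 1$). Commutativity then reduces to naturality of Kasparov products along this combined deformation, together with the identification that at $\varepsilon = 0$ the construction recovers the classical tangent groupoid quantization $\mathrm{Op}_e$.

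The main obstacle is the $C(X)$-coherence throughout: because $\KK(L^2(X))$ is not a $C(X)$-algebra in Kasparov's strict sense (multiplication operators are not central in $\LL(L^2(X))$), one cannot simply invoke off-the-shelf $KK^{C(X)}$-machinery. Instead one must verify directly that pseudolocality $[F, M_f] \in \KK(L^2(X))$ is propagated through the deformation. The key observation is that at $t = 0$, $C(X)$ \emph{is} genuinely central in $M(C^*(T_HX))$ (as functions on the unit space of the nilpotent bundle $T_HX$), so commutators with $C(X)$ vanish on the nose, and the smooth structure of $\mathcal{G}$ converts this exact commutation at $t = 0$ into commutation modulo $\KK(L^2(X))$ at $t = 1$ --- precisely the Kasparov pseudolocality condition. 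Upgrading every $KK$-equivalence used in [vE10a], as well as the two-parameter deformation of the commutativity argument, to respect this $C(X)$-bimodule structure is the principal technical task.
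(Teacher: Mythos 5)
Your strategy is the same as the paper's: use the parabolic (Heisenberg) tangent groupoid with its evaluation maps at $t=0$ and $t=1$, define $\mathrm{Op}_H$ as the composite of the inverse of $[\mathrm{ev}_0]$ with $(\mathrm{ev}_1)_*$, carry the diagonal $C(X)$-representation along to land in $KK^0(C(X),\CC)$, and obtain commutativity with $\Psi$ from the adiabatic (two-parameter) deformation. The explicit Kasparov module $(\EE, \mathbb{P}, \phi)$ with $\EE = C^*(\THX)^{\oplus 2}$ and $\phi$ the diagonal representation of $C(X)$ that the paper builds is the realization of what you describe, and your heuristic that exact centrality of $C(X)$ at $t=0$ degrades to pseudolocality at $t=1$ is the right intuition for compactness of $[\mathbb{P},\phi(f)]$.

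There is, however, one concrete gap that you correctly flag but do not close: you declare the upgrade of all the $KK$-equivalences from $K_0(-)=KK(\CC,-)$ to $KK(C(X),-)$ to be ``the principal technical task'' without saying how it is done, and it is not automatic. For a short exact sequence $0\to J\to A\to A/J\to 0$ with $J$ contractible, the induced map $KK(C(X),A)\to KK(C(X),A/J)$ is an isomorphism only if the extension is \emph{semi-split}, since without that the functor $KK(C(X),-)$ has no six-term exact sequence. The paper secures this through a specific chain: every $C^*$-algebra appearing is Type I, hence nuclear; by Choi--Effros nuclearity forces every ideal to be semi-split; and because $C(X)$ is separable, $KK(C(X),-)$ then does have six-term exact sequences [Blackadar, Thm.~19.5.7]. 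It also isolates a natural transformation $\alpha_X\colon KK(\CC,A)\to KK(C(X),A)$ on $C(X)$-algebras and notes that $\Psi$, being represented by a $KK^X$-element, commutes with $\alpha_X$; this is what ties the $K$-theoretic Connes--Thom isomorphism used in Section~\ref{NCT} to the $KK(C(X),-)$-level statement needed here. Without the nuclearity/semi-split input, your $[\mathrm{ev}_0]^{-1}$ exists in $K_0$ but has not been shown to exist in $KK(C(X),-)$, which is where the map $\mathrm{Op}_H$ must be defined.
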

\begin{proof}
The proof is a strengthening of the groupoid arguments developed in \cite{vE10a}.
The main difference is that we need to replace $K$-theory $KK(\CC,-)$ by the functor $KK(C(X),-)$.
What makes this possible is that all $C^*$-algebras that play a role in our argument are of Type I, and hence nuclear.
Therefore all ideals are semi-split (by \cite{CE76}), 
and since $C(X)$ is separable ($X$ is a compact manifold) the $KK(C(X),-)$ functor has $6$-term exact sequences \cite[Thm. 19.5.7]{Bl98}.
As a result, the quotients by various contractible ideals that play a role in the tangent groupoid argument induce isomorphisms not only in $K_0(-)\cong KK(\CC, -)$, but also in $KK(C(X),-)$.

We briefly review the argument and indicate how it can be strengthened.
For a differential Heisenberg operator $P$
with invertible principal symbol 
we constructed in \cite[section 3.6]{vE10a}  an element in the $K$-theory of the parabolic tangent groupoid,
\[ [\mathbb{P}] \in K_0(C^*(\THX)).\]
The `parabolic tangent groupoid' $\THX$ is algebraically the disjoint union of $T_HX$ and the family of pair groupoids $X\times X\times (0,1]$, similar to Connes' tangent groupoid.
The `blow-up' of the diagonal in $X\times X$ is defined using the parabolic dilations $\delta_s$ in the Heisenberg groups $G_x$ \cite[section 3]{vE10a}.
The class $[\mathbb{P}]$ combines the principal Heisenberg symbol of $P$ with $P$ itself into a single $K$-theory element for the tangent groupoid.

We must modify this construction and apply it to order zero operators.
This is actually a considerable simplification,
especially if we construct $[\mathbb{P}]$ in $KK$-theory.
Let $\EE=C^*(\THX)\oplus C^*(\THX)$ be the obvious $\ZZ_2$-graded Hilbert module over $C^*(\THX)$.
The adjointable operator 
\[ \mathbb{P}\in \LL(\EE)\cong M(C^*(\THX))\otimes M_2(\CC)\]
restricts to the principal symbol 
\[ \left(\begin{array}{cc}0&P_x^*\\P_x&0\end{array}\right)\;\in M(C^*(G_x))\otimes M_2(\CC) \]
for the fiber at $s=0$ and $x\in X$,
and to
\[ \left(\begin{array}{cc}0&P^*\\P&0\end{array}\right)\;
\in \LL(L^2(M)\oplus L^2(M)) \cong M(C^*(X\times X))\otimes M_2(\CC)\]
for all other values $s>0$.
The principal symbol is, by definition, the highest order part in an asymptotic expansion of the operator kernel of $P$, and these asymptotics agree precisely with the parabolic `blow-up' of the diagonal in $X\times X$ that defines $\THX$.

What we did not realize in \cite{vE10a} is that $(\EE,\mathbb{P})$ actually defines an element in the $KK$-group
\[ [\mathbb{P}] \in KK(C(X), C^*(\THX)).\]
There is an obvious diagonal representation 
\[ \phi\;\colon\; C(X)\to \LL(\EE)=M(C^*(\THX))\otimes M_2(\CC).\]
At $s=0$ it restricts to the identification of $C(X)$ with the center of $M(C^*(T_HX))$,
while at $s>0$ we have the representation of continuous functions on $X$ as multiplication operators in $M(C^*(X\times X))\cong \LL(L^2(X))$.
One easily verifies compactness of commutators $[\mathbb{P}, \phi(f)]\in \KK(\EE)$
for continuous functions $f\in C(X)$. 

Just as in \cite{vE10a}, the restriction map at $s=0$ has a contractible ideal and therefore induces an isomorphism
\[ e_0\;\colon\; KK(C(X), C^*(\THX))\cong KK(C(X), C^*(T_HX)).\]
Observe that, by construction of the Fredholm module $[\mathbb{P}]$,
we have $e_0([\mathbb{P}])=[\sigma_H(P)]$.

Restriction to the fiber at $s=1$ induces a map
\[ e_1\;\colon\; KK(C(X), C^*(\THX))\to KK(C(X), C^*(X\times X))\cong KK(C(X), \CC).\]
Again, by construction, $e_1([\mathbb{P}])=[P]$.
It follows that the combined map
\[ e_1\circ e_0^{-1}\;\colon\;  KK(C(X), C^*(T_HX))\to KK(C(X),\CC),\]
is the ``choose a hypoelliptic operator'' map,
\[ e_1\circ e_0^{-1}([\sigma_H(P)]) = [P].\]
The exact same argument, using the tangent groupoid $\TX=TX\sqcup X\times X\times (0,1]$ of Connes,
works for elliptic order zero operators.

Having proven that the ``choose an operator'' map is induced by the appropriate tangent groupoid,
we can now show that these two maps commute with the isomorphism $\Psi$.
The argument from \cite[section 3.8]{vE10a} involving the {\em adiabatic groupoid} of the parabolic tangent groupoid $\THX$ (a deformation of a deformation)
applies, without essential change, to show that the diagram
\[ \xymatrix{    KK(C(X), C_0(T^*X)) \ar[r]^-{\Psi}\ar[d] 
               & KK(C(X), C^*(T_HX)) \ar[ld] \\
                 KK((C(X), \CC)
               &  }
\]
commutes. 

If $\HH_A$ denotes the standard countably generated free Hilbert $A$-module,
then any $\ast$-homomorphism $\phi\,\colon C(X)\to Z(M(A))$ 
induces, in the obvious way, a map $\tilde{\phi}\,\colon C(X)\to Z(\LL(\HH_A))$.
Thus, for a $C(X)$-$C^*$-algebra $A$ there is a natural homomorphism
\[ \alpha_X\;\colon\;KK(\CC, A) \to KK(C(X),A).\]
The map $\alpha_X$ is a natural transformation from the $KK(\CC,-)$ functor to the $KK(C(X),-)$ functor on the category of $C(X)$-$C^*$-algebras.
In fact, it is a natural transformation for the $KK^X$-category.

Therefore, since the isomorphism $\Psi$ is implemented by a $KK^X$-element
\[ \Psi\in KK^X(C_0(T^*X), C^*(T_HX))\]
we have a commutative diagram
\[ \xymatrix{    KK(\CC, C_0(T^*X)) \ar[r]^-{\Psi}\ar[d]_{\alpha_X} 
               & KK(\CC, C^*(T_HX)) \ar[d]^{\alpha_X} \\
                 KK(C(X), C_0(T^*X)) \ar[r]^-{\Psi} 
               & KK(C(X), C^*(T_HX)) }
\]
Composing this with our result above yields the proposition.

\end{proof}

\subsection{The hypoelliptic index theorem in $K$-homology}

The Atiyah-Singer theorem for elliptic operators amounts to commutativity of the triangle \cite[Thm. 23.1]{BD80},
\[ \xymatrix{  & K^0(T^*X) \ar[dl]_{c}\ar[dr]^{{\rm Op_e}} \\
                 K_0(X) \ar[rr]^-{\mu}  
              && KK_0(C(X),\CC).
            }
\]
We obtain an analogous theorem for the Heisenberg calculus.

\begin{theorem}\label{triangle}
Let $(X, H)$ be a closed contact manifold.
Then there is commutativity in the diagram
\[ \xymatrix{  & K_0(C^*(T_HX)) \ar[dl]_{b}\ar[dr]^{{\rm Op_H}} \\
                 K_0(X) \ar[rr]^-{\mu}  
              && KK_0(C(X),\CC)
            }
\]
Equivalently, the geometric $K$-cycle that corresponds to the analytic $K$-cycle $[P]$ determined by a Heisenberg-elliptic operator is
\[\mu^{-1}([P]) = b(\sigma_H(P))\] 
\end{theorem}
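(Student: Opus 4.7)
The plan is to deduce the theorem from the two commutative triangles already in place (Theorem \ref{Poincare} and Theorem \ref{choose}) combined with the classical Atiyah--Singer theorem in $K$-homology. More precisely, I would fit the desired triangle into the following three-dimensional diagram with $K^0(T^*X)$ at the apex:
\[ \xymatrix{   & K^0(T^*X) \ar[dl]_{c} \ar[dr]^{\mathrm{Op}_e} \ar[d]|{\Psi} & \\
                K_0^{top}(X) \ar[rr]^-{\mu} & K_0(C^*(T_HX)) \ar[dl]|{b} \ar[dr]|{\mathrm{Op}_H} & KK^0(C(X),\CC) \\
                & & }
\]
The left-hand face of this tetrahedron is Theorem \ref{Poincare}: $b\circ\Psi=c$. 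The right-hand face is Theorem \ref{choose}: $\mathrm{Op}_H\circ\Psi=\mathrm{Op}_e$. The back face (the one not involving $K_0(C^*(T_HX))$) is precisely the Baum--Douglas formulation of the Atiyah--Singer index theorem: $\mu\circ c=\mathrm{Op}_e$.

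With these three faces commuting, verifying commutativity of the front face $\mu\circ b=\mathrm{Op}_H$ reduces to a diagram chase. For any $\xi\in K^0(T^*X)$,
\[ \mu\circ b\circ\Psi(\xi)=\mu\circ c(\xi)=\mathrm{Op}_e(\xi)=\mathrm{Op}_H\circ\Psi(\xi). \]
Hence $\mu\circ b$ and $\mathrm{Op}_H$ agree on the image of $\Psi$. Since $\Psi\colon K^0(T^*X)\to K_0(C^*(T_HX))$ is an isomorphism (the Connes--Thom isomorphism, cf.\ the discussion preceding Proposition \ref{Crux}), it is in particular surjective, so the two maps are equal on all of $K_0(C^*(T_HX))$.

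Applying this equality to the specific class $[\sigma_H(P)]\in K_0(C^*(T_HX))$ associated to a Heisenberg-elliptic operator $P$, and using the identification $\mathrm{Op}_H([\sigma_H(P)])=[P]$ from part $(i)$ of Theorem \ref{choose}, yields
\[ \mu(b(\sigma_H(P)))=\mathrm{Op}_H([\sigma_H(P)])=[P], \]
i.e.\ $\mu^{-1}([P])=b(\sigma_H(P))$ in $K_0^{top}(X)$, as claimed. There is no genuine obstacle at this stage: all of the analytic work sits inside Theorems \ref{Poincare} and \ref{choose}, and the only subtlety is remembering that Theorem \ref{choose}$(i)$ is stated for order zero operators, so one should first reduce a general Heisenberg-elliptic $P$ of positive order to the order zero case by replacing $P$ with $P(1+P^*P)^{-1/2}$ (or any other standard order zero representative of the same $K$-homology class), which has the same principal Heisenberg symbol class in $K_0(C^*(T_HX))$ and the same Kasparov class $[P]\in KK^0(C(X),\CC)$.
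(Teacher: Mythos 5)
Your argument is precisely the paper's proof: decompose the big triangle into the three sub-triangles coming from Theorem \ref{Poincare}, Theorem \ref{choose}, and the Baum--Douglas form of Atiyah--Singer, then use surjectivity of the Connes--Thom isomorphism $\Psi$ to conclude. Your added remark about reducing a positive-order $P$ to $P(1+P^*P)^{-1/2}$ to invoke Theorem \ref{choose}$(i)$ is a sensible clarification but not a departure from the paper's route.
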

\begin{proof}
The theorem is implied by commutativity of the three smaller triangles in the  diagram,
\[ \xymatrix{   & K_0(C^*(T_HX)) \ar@/^/[ddr]^{\rm Op_H} \ar@/_/[ddl]_b & \\
               & K^0(T^*X) \ar[u]^{\Psi}\ar[dr]_{\rm Op_e}\ar[dl]^c& \\
 K_0^{top}(X) \ar[rr]_\mu & & KK^0(C(X),\CC)              
 }
\]
\end{proof}
Modulo an explicit computation of the Poincar\'e dual $b(\sigma_H(P))$ of the Heisenberg symbol of a hypoelliptic operator, Theorem \ref{triangle} solves the general index problem for hypoelliptic operators in the Heisenberg calculus.
In section \ref{computing_b} we discuss the explicit computation of the $(M,E,\varphi)$ cycle that corresponds to a Heisenberg-elliptic operator, and  apply it in concrete examples.

\section{Computation of the $K$-cycle}\label{computing_b}

In this section we explicitly calculate $\mu^{-1}([P]) = b(\sigma_H(P))$.
A number of examples are then considered.

\subsection{Perturbing the symbol}
Consider a Heisenberg-elliptic operator
\[ P\;\colon\; C^\infty(X,F^0)\to C^\infty(X,F^1)\]
acting on sections in a $\CC$ vector bundle $F^0$,  whose range consists of sections in a vector bundle $F^1$.
We wish to compute the image $b(\sigma_H(P))$ of the $K$-theory element $[\sigma_H(P)]$ associated to the Heisenberg symbol of a Heisenberg-elliptic operator $P$ under the noncommutative Poincar\'e duality map,
\[ b\;\colon\; K_0(C^*(T_HX))\to K_0(X)\]
Following Definition \ref{map_b}, the first step is to lift the element $[\sigma_H(P)]$ in the $K$-theory group $K_0(C^*(T_HX))$ to an element in $K_0(I_H)$.

The full Heisenberg symbol of $P$  is a smooth family $\sigma_H(P)=\{P_x, x\in X\}$ of operators
\[ P_x\;\colon\; C_c^\infty(G_x, F^0_x)\to C_c^\infty(G_x, F^1_x)\]
Each $P_x$ is an operator of convolution with a compactly supported distribution on $G_x$, where $G_x=H_x\times \RR$ is the Heisenberg group that is the fiber at $x\in X$ of $T_HX$.
Taken together, the operators $\{P_x, x\in X\}$ correspond to a compactly supported distributional section  $\sigma_H(P)$ of the bundle ${\rm Hom}(\pi^*F^0,\pi^*F^1)$ on $T_HX$.
Recall that the operator $P$ determines the distribution $\sigma_H(P)$ on $T_HX$ (its `full symbol') up to a perturbation by a compactly supported smooth section defined on $T_HX$.

Let $\pi_0$ denote the representation of the Heisenberg group that assembles all scalar unitary  representations.
At the level of the group algebra, $\pi_0$ is composition of the Fourier transform $C_c^\infty(G)\to C^\infty(V^*\times \RR^*)$
with restriction to $s=0$ in $\RR^*$,
\[ C_c^\infty(G)\to C^\infty(V^*)\]
For scalar distributions on $T_HX$ this representation induces the algebra homomorphism
\[ \pi_0\;\colon \; \EE'(T_HX)\to C^\infty(H^*)\]
where $\EE'(T_HX)$ is a convolution algebra, while $C^\infty(H^*)$ is the algebra with pointwise multiplication of functions.
 
If we take the vector bundles $F^0, F^1$ into account, then the scalar representations of $P_x$ assemble to a function
\[ \pi_0(P_x) \;\colon \;H_x^*\to \mathrm{Hom}(F_x^0,F_x^1)\]
and what we get is the classical symbol of $P$ restricted to $H^*$, 
\[ \pi_0(\sigma_H(P)) \in C^\infty(H^*, \mathrm{Hom}(\pi^*F^0,\pi^*F^1))\]
where $\pi^*F^j$ is the pull-back of $F^j$ from $X$ to $H^*$.
This classical part of the Heisenberg symbol is `elliptic' in the sense that it is invertible outside a compact set in $H^*$.

As a class in $K^0(H^*)$ the partial symbol $\pi_0(\sigma_H(P))$ is trivial.
To see this, consider the exact sequence in $K$-theory
\[ K^0(T^*X\setminus H^*) \to K^0(T^*X)\to K^0(H^*)\]
The first map is surjective, and so the second map is trivial.
Therefore the same is true for the isomorphic sequence  
\[ K_0(I_H) \to K_0(C^*(T_HX))\stackrel{\pi_0}{\longrightarrow} K_0(C^*(H))\]
Since  $\pi_0(\sigma_H(P))$ is $K$-theoretically trivial we may assume that,
after a compactly supported perturbation of $\pi_0(\sigma_H(P))$, we have
\[ \pi_0(\sigma_H(P)) \in C^\infty(H^*, \mathrm{Iso}(\pi^*F^0,\pi^*F^1)).\]
(We may have to stabilize $F^0, F^1$.)
Note that we can extend such a perturbation to the full Heisenberg symbol $\sigma_H(P)$. 
We shall assume, from now on, that $\pi_0(\sigma_H(P))$ is invertible on all of $H^*$.

We make one further modification to the symbol.
If we restrict the (perturbed) $\pi_0(\sigma_H(P))$ to the zero section in $H^*$,
the resulting section in ${\rm Hom}(F^0,F^1)$ is an isomorphism of vector bundles
\[ \sigma_0\;\colon\; F^0\to F^1\]
We compose the operator $P$ with the vector bundle isomorphism $\sigma_0^{-1}$, andobtain a new operator 
\[ \sigma_0^{-1}\circ P\;\colon\; C^\infty(X,F^0)\to C^\infty(X,F^0)\]
that represents, of course, the same $K$-homology class as $P$.
The point of this modification is that the equatorial symbol of $\sigma_0^{-1}\circ P$ (i.e., its value at the spherical boundary $S(H^*)$ of $H^*$) is homotopic to the constant map from $S(H^*)$ to the identity operator in the fibers of $F^0$.
In fact, we can choose the isomorphism $\sigma_0\colon F^0\to F^1$ to be {\em any} isomorphism with this property. 


With these modifations, the full Heisenberg symbol $\sigma_0^{-1}\sigma_H(P)$ defines an element in the convolution algebra $\EE'(T_HX, {\rm End}(\pi^* F^0))$, and it is invertible modulo the ideal
\[  I^\infty_H := \{ x\in C_c^\infty(T_HX, {\rm End}(\pi^* F^0)) \,\mid\, \pi_0(x) = 0\}\]
Since the closure of $I_H^\infty$ is $I_H$, the formal difference of idempotents associated to $\sigma_0^{-1}\sigma_H(P)$, as defined in section \ref{K-symbol}, is the desired element in $K_0(I_H)$.

\subsection{Suspension in $K$-theory}

The next step in Definition \ref{map_b} is the isomorphism
\[ K_0(I_H)\cong K^1(X)\oplus K^1(X)\]
which is a composition of Morita equivalence and suspension. 
We may reverse the order of the Morita equivalence and the suspension isomorphisms.
Let $B$ be the $C^*$-algebra
\[ B=\KK(V_-^{BF})\oplus \KK(V^{BF}_+)\]
In section \ref{NCT} we defined an explicit isomorphism
\[ I_H\cong C_0(\RR)\otimes B\]
We will first prove a general lemma in $K$-theory that allows us to compute the suspension isomorphism
\[ K_0(C_0(\RR)\otimes B) \cong K_1(B)\]
We subsequently consider the effect of the Morita equivalence $B\sim C(X)\oplus C(X)$. 
 
\begin{lemma}\label{K-one}
Let $B$ be a separable $C^*$-algebra.
Let $u\in M_n(B)^+$ be a unitary matrix that represents a class $[u]\in K_1(B)$,
and let $\theta$  denote the suspension isomorphism,  
\[ \theta\;\colon\; K_1(B)\to K_0(C_0(0,1)\otimes B).\]
If $F(t)$, $t\in[0,1]$ is the norm continuous family that linearly interpolates between $F(0)=1$ and $F(1)=u$,
\[ F(t)=1-t+tu \in M_n(B)^+,\] 
then $F\in C_0([0,1], M_n(B))^+$ is invertible modulo $C_0((0,1), M_n(B))$,
and defines a relative class
\[ [F]\in K_0(C_0(0,1)\otimes B).\]
Then $[F]=\theta([u])$.
\end{lemma}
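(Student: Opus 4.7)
The plan is to recognize the linear interpolation $F$ as exactly the lift used in the standard description of $\theta$ as a boundary map in a mapping cone sequence. Write $SB := C_0(0,1)\otimes B$ for the suspension and $CB := C_0((0,1], B)$ for the cone. Then $\theta$ may be presented as the connecting homomorphism associated to the short exact sequence
\[ 0 \to SB \to CB \xrightarrow{\mathrm{ev}_1} B \to 0. \]
Since $CB$ is contractible (via $f_s(t) := f(st)$), the six-term exact sequence collapses to give an isomorphism $\partial \colon K_1(B) \to K_0(SB)$, and under the orientation of $(0,1)$ from $0$ to $1$ this boundary map coincides with the suspension isomorphism $\theta$.

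First I would verify that $F$ lies in the cone subalgebra. Writing $F(t) = 1 + t(u-1)$ exhibits $F$ as $1 + F'$ with $F' \in M_n(CB)$, so $F \in M_n(CB^+)$ and $\mathrm{ev}_1(F) = u$. Since $F(0) = 1$ and $F(1) = u$ are both units, $F$ is automatically invertible modulo $M_n(SB)$, and a parametrix can be given explicitly by the analogous interpolation $G(t) := 1 + t(u^{-1} - 1)$.

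Next I would apply the Connes construction recalled in section \ref{K-symbol}, which attaches to an element invertible modulo an ideal a formal difference of idempotents representing a relative $K$-theory class. Applied to $F$ with the parametrix $G$ in $M_n(CB^+)$, it yields a class $[F] \in K_0(CB, SB)$ which, via excision, is an element of $K_0(SB)$. By the defining property of the connecting homomorphism in the Connes/Milnor formalism, this element is precisely $\partial([u])$. The class $[F]$ referred to in the lemma is computed by the same Connes recipe inside the larger ambient algebra $C_0([0,1], M_n(B))^+$; naturality of excision identifies the two relative classes in $K_0(SB)$. Hence $[F] = \partial([u]) = \theta([u])$.

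The main subtlety to pin down is orientation. The identification $C_0(0,1)\otimes B \cong SB$ and the sign of the boundary map both depend on the orientation of $(0,1)$; reversing the direction of interpolation (from $u$ at $t=0$ to $1$ at $t=1$) would produce $\theta([u^{-1}]) = -\theta([u])$. Once the convention that $(0,1)$ runs from $0$ to $1$ is aligned with the orientation already fixed in Definition \ref{map_b}, the identity $[F] = \theta([u])$ is a direct reading of the boundary map applied to $u$.
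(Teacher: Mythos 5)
Your proof is correct, and it takes a genuinely different route from the one in the paper. You recognize that $F(t)=1+t(u-1)$ lies in the unitized cone $M_n(CB^+)$ with $\mathrm{ev}_1(F)=u$, so $F$ is a lift of $u$ along the cone extension $0\to SB\to CB\to B\to 0$; you then invoke two standard facts — that the index map of the cone extension \emph{is} the suspension isomorphism $\theta$ (since $CB$ is contractible, the six-term sequence collapses), and that the Connes/Milnor relative class of a lift computes the connecting map. The identification of the two ambient algebras ($C_0([0,1],M_n(B))^+$ vs.\ $CB^+$) by excision, and the explicit parametrix $G(t)=1+t(u^{-1}-1)$, close the argument cleanly. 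The paper proceeds instead inside $KK(\CC,A)$: it chooses a unitary path $z(t)\in M_{2n}(B^+)$ from the identity to $\mathrm{diag}(u,u^{-1})$, forms the loop of projections $e_t=z_tp_nz_t^{-1}$, appeals to Blackadar's Theorem 8.2.2 for $\theta([u])=[e]-[p_n]$, then compares $F$ to the partial isometry family $G(t)=z(t)p_n$ by a homotopy of Kasparov modules and reads off $\theta([u])$ as the index of $G$. What your approach buys is conceptual economy: it needs no auxiliary unitary path in $M_{2n}$, no closed-range partial isometry, and no Kasparov-module homotopy — only the universal property of the connecting map already used for $[\sigma_H(P)]$ in section \ref{K-symbol}. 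What the paper's approach buys is an explicit projection-valued representative $e$ for $\theta([u])$ and a self-contained homotopy argument. Your note on orientation is exactly the right caveat: the sign of $\partial$ depends on the identification of $(0,1)$ with $\RR$, and it must match the convention fixed in Definition \ref{map_b}; interpolating from $u$ at $t=0$ to $1$ at $t=1$ would produce $-\theta([u])$.
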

\begin{proof}
We will prove this by making use of the flexibility of $KK$-theory.
For ease of notation, let us write $A=C_0((0,1), B)$.
Since $M_n(B)^+\subset M_n(M(A))=\LL(A^n)$, we can think of $F$ as an adjointable operator on the Hilbert $A$-module $A^n$.
The fact that $F_0=1$, $F_1=u$ and $F_t=1$ modulo $M_n(B)$ for all $t\in [0,1]$
implies that $F$ is invertible modulo compact operators $\KK(A^n)=C_0((0,1),M_n(B))$.
In other words, the relative $K$-theory class defined by $F$ can also be represented as a Kasparov module
\[ [F\,\colon A^n\to A^n]\in KK(\CC, A).\]
We will prove that this Fredholm module corresponds to $\theta([u])$ under the standard identification
$KK(\CC, A)\cong K_0(A)$.

Choose a continous family of unitaries $z(t)\in M_{2n}(B^+), t\in[0,1]$ such that
\[ z_0=\left(\begin{array}{cc} 1_n&0\\0&1_n\end{array}\right),\;
 z_1=\left(\begin{array}{cc} u&0\\0&u^{-1}\end{array}\right).\]
Let 
\[ p_n=\left(\begin{array}{cc} 1_n&0\\0&0\end{array}\right),\]
and let $e$ denote the continuous family of projections
\[ e_t = z_t p_n z_t^{-1}.\]
Observe that $e_0=e_1=p_n$.
Then the proof of [Blackadar, Theorem 8.2.2] implies that
\[ \theta([u]) = [e]-[p_n]\in K_0(C_0((0,1),B)).\]
Consider the family of partial isometries 
\[ G(t) = z(t)p_n.\]
Observe that $G(0) = p_n$, $G(1) = u$ and that $G(1) = p_n$ modulo $\KK(A^{2n})$, while
\[ 1-G^*G=1-p_n,\; 1-GG^*=1-e.\]
Let $\phi\,\colon\CC\to \LL(B^{2n})$ denote the homomorphism with
$\phi(1)=p_n$.
With this choice of a nonunital map $\phi$ the operator $G$ satisfies the Fredholm property, and we obtain a Kasparov module
\[ [G\,\colon A^{2n}\to A^{2n}, \phi]\in KK(\CC, A).\]
Moreover, the module $[G\,\colon A^{2n}\to A^{2n}, \phi]$ is homotopic to $[F\,\colon A^n\to A^n, p_n\phi p_n]$, where $p_n\phi p_n\,\colon\CC\to \LL(A^n)$ is just the standard unital map.
But unlike $F$ the operator $G$ has closed range, and
we can take its index
\[ {\rm Index}\,G = [1-G*G]-[1-GG^*]=[e]-[p_n]=\theta([u]).\]
It follows that $[G]$ corresponds to $\theta([u])$,
and therefore so does $[F]$.
\end{proof}

\subsection{The general formula}

We now derive an explicit formula for the $K$-homology element $\mu^{-1}([P]) = b(\sigma_H(P))$ for an arbitrary Heisenberg-elliptic operator $P$.

\begin{theorem}\label{general_formula}
Let $P$ be a Heisenberg-elliptic operator on a closed oriented contact manifold that acts on sections in a complex vector bundle,
\[ P\;\colon\; C^\infty(X,F^0)\to C^\infty(X, F^1)\]
Let $\sigma_0\colon F_0\to F_1$ be a vector bundle isomorphism
such that the pullback of $\sigma_0$ to the sphere bundle $S(H^*)$
is homotopic to the equatorial symbol $\sigma(P)|S(H^*)$.

Then the element in $K$-homology determined by the Fredholm operator $P$ is
\[[P] = \pi_+(\sigma_0^{-1}\sigma^+_H(P))\cap [X^+] + \pi_-(\sigma_0^{-1}\sigma^-_H(P))\cap [X^-]\]
Here $\pi_\pm$ denote the Bargmann-Fok representations on the Hilbert modules $V^{BF}_\pm\otimes F^0$ of the two components of the principal Heisenberg symbol of the operator $\sigma_0^{-1}\circ P$.
\end{theorem}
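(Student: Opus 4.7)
The plan is to combine Theorem \ref{triangle} with an explicit unpacking of Definition \ref{map_b} applied to $\sigma_H(P)$. By Theorem \ref{triangle} we have $\mu^{-1}([P]) = b(\sigma_H(P))$, so the task reduces to computing $b(\sigma_H(P))$ and matching it with the sum in the statement. Following Definition \ref{map_b}, three moves are needed: lift $[\sigma_H(P)] \in K_0(C^*(T_HX))$ to a class in $K_0(I_H)$; use the explicit isomorphism $I_H \cong C_0(\RR) \otimes \KK(V^{BF}_-) \oplus C_0(\RR) \otimes \KK(V^{BF}_+)$ together with suspension and Morita equivalence from Section \ref{NCT} to arrive at a pair of classes in $K^1(X) \oplus K^1(X)$; and finally cap with $[X^-]$ and $[X^+]$ and add.

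First I would establish the lift. The $K$-theory class of $\sigma_H(P)$ is only defined modulo $C_c^\infty(T_HX)$, and lifting to $K_0(I_H)$ requires invertibility modulo the ideal $I_H^\infty$. The scalar representation $\pi_0$ sends $\sigma_H(P)$ to the classical principal symbol of $P$ restricted to $H^*$, which defines the trivial class in $K^0(H^*)$ because the first map in the exact sequence $K^0(T^*X \setminus H^*) \to K^0(T^*X) \to K^0(H^*)$ is surjective. This triviality permits a compactly supported perturbation of $\sigma_H(P)$ making $\pi_0(\sigma_H(P))$ pointwise invertible. After stabilizing $F^0$ and $F^1$ if necessary, the value of the perturbed $\pi_0(\sigma_H(P))$ at the zero section of $H^*$ is an isomorphism $\sigma_0 \colon F^0 \to F^1$ whose pullback to $S(H^*)$ is homotopic to the equatorial symbol of $P$. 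Composing with $\sigma_0^{-1}$ produces a full symbol in $\EE'(T_HX, {\rm End}(\pi^* F^0))$ invertible modulo $I_H^\infty$; its associated difference of idempotents (Section \ref{K-symbol}) lifts $[\sigma_H(P)]$ to $K_0(I_H)$.

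Next, under the $C^*$-algebra isomorphism from Section \ref{NCT}, the restriction of $\sigma_0^{-1} \circ \sigma_H(P)$ to the two components of $H^* \times \RR^\times$ decomposes, via the Bargmann-Fok representations $\pi_\pm$, into a pair of continuous families $t \mapsto F_\pm(t) \in \KK(V^{BF}_\pm \otimes F^0)^+$ parametrized by $t \in [0,1]$ (after reparametrizing each ray by $s \mapsto \log|s|$). Each family interpolates linearly from the identity at $t = 0$ to the unitary $\sigma_0^{-1} \circ \sigma_H^\pm(P)$ at $t = 1$. Lemma \ref{K-one} identifies the class of $F_\pm$ in $K_0(C_0(\RR) \otimes \KK(V^{BF}_\pm))$ with the suspension $\theta[\sigma_0^{-1} \circ \sigma_H^\pm(P)]$ of the corresponding unitary in $K^1(X)$. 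Morita equivalence by the bimodules $V^{BF}_\pm$ transports these to $[\sigma_0^{-1} \circ \sigma_H^\pm(P)] \in K^1(X)$, and capping with $[X^\pm]$ and summing gives the formula.

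The main obstacle I expect is bookkeeping: matching the two Bargmann-Fok representations and the two rays of $\RR^\times$ correctly with the two co-orientations of the contact structure, and verifying that the sign conventions in the identifications $K^0(X \times (0,\infty)) \cong K^1(X)$ and $K^0(X \times (-\infty,0)) \cong K^1(X)$ conspire with the Spin$^c$ orientations of $X^+$ and $X^-$ to produce the stated formula without hidden sign errors. A secondary point requiring care is the independence of $b(\sigma_H(P))$ from the choice of compactly supported perturbation used in the lifting step: two admissible perturbations differ by an element of $C_c^\infty(T_HX) \cap I_H^\infty$ whose image in $K^1(X) \oplus K^1(X)$ is a null-homotopy of the pair of unitaries, and hence vanishes after capping.
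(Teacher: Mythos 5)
Your overall strategy --- reduce to computing $b(\sigma_H(P))$ via Theorem \ref{triangle}, lift to $K_0(I_H)$ after perturbing $\pi_0(\sigma_H(P))$, transport through the isomorphism $I_H\cong C_0(\RR)\otimes\KK(V_-^{BF})\oplus C_0(\RR)\otimes\KK(V_+^{BF})$, apply Lemma \ref{K-one} and Morita equivalence, then cap with $[X^\pm]$ --- is exactly the paper's approach, and your lifting paragraph reproduces Section 5.1 faithfully. However, the middle of your argument has two genuine gaps.

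First, for a Heisenberg-elliptic operator of positive order (e.g., any differential operator such as $P_\gamma$), the Bargmann-Fok representations $\pi_\pm(\sigma_0^{-1}\sigma_H^\pm(P))$ act as \emph{unbounded} operators in the fibers of $V_\pm^{BF}\otimes F^0$, not as unitaries in $B^+$ where $B=\KK(V_-^{BF})\oplus\KK(V_+^{BF})$. Your families $F_\pm(t)$ are then not norm-continuous paths in $M_n(B)^+$, and Lemma \ref{K-one} cannot be applied as stated. The paper fixes this by first replacing $P$ with the order zero Heisenberg pseudodifferential operator
\[
\tilde{P}=\sigma_0^{-1}\circ P(1+P^*P)^{-1/2},
\]
whose principal Heisenberg symbol is a unitary; the formula in terms of the unnormalized $\sigma_H^\pm(P)$ is recovered afterward by the compression argument in the remark following the theorem. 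Some such normalization has to be built into the proof --- it is not optional bookkeeping.

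Second, the assertion that the restriction of the full symbol to each ray of $\RR^\times$ ``interpolates linearly'' from the identity to the principal symbol is false as written. After Fourier transform and the $\log|s|$ reparametrization, the transported full symbol is a norm-continuous path with the same endpoints as $1-t+tu$, but it is in general not the affine path. Lemma \ref{K-one} only computes the relative class of the affine interpolation $F(t)=1-t+tu$. The missing step --- made explicit in the paper --- is that the transported full symbol of $\tilde{P}$ and $1-t+tu$ differ by an element of $C_0((0,1),B)$, and therefore determine the same class in $K_0(I_H)$. Without this homotopy there is no bridge from the analytic symbol to the affine path that the lemma addresses, and the appeal to Lemma \ref{K-one} is unjustified.
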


\begin{proof}
Choose hermitian structures for the vector bundles $F^0, F^1$,
such that $\sigma_0\colon F^0\to F^1$ is a unitary isomorphism.
For technical reasons, replace $P$ with  
\[ \tilde{P}=\sigma_0^{-1}\circ P(1+P^*P)^{-1/2} \]
Then $\tilde{P}$ is an order zero Heisenberg pseudodifferential operator that represents  the same $K$-homology class as $P$. But the {\em principal} Heisenberg symbol of $\tilde{P}$ is a unitary.

Let $B=\KK(V_-^{BF})\oplus \KK(V^{BF}_+)$ and
\[ I_H \cong C_0(\RR,B)\cong C_0((0,1), B)\] 
where we choose an arbitrary orientation preserving homeomorphism $\RR\approx (0,1)$.

Recall that the equatorial symbol of $\tilde{P}$ is homotopic to the identity.
We first assume, for simplicity, that the equatorial symbol of $\tilde{P}$ is {\em equal} to the identity.
Then we can define the unitary 
\[ u=\pi_-(\sigma^-_H(\tilde{P})) \oplus \pi_+(\sigma^+_H(\tilde{P}))\in B^+\] 
The difference between the full symbol $\sigma_H(\tilde{P})$ and $1-t+tu$ is an element in $C_0((0,1),B)$,
so that $\sigma_H(\tilde{P})$ and $1-t+tu$ represent the same element in $K_0(I_H)$.
Therefore, by Lemma \ref{K-one}, the $K$-theory element
\[ [u]\in K_1(B) \cong K_1(\KK(V_-^{BF})) \oplus K_1(\KK(V_+^{BF}))\]
corresponds, under the suspension isomorphism, to $sigma_H(\tilde{P})]\in K_0(I_H)$.
Completing the steps in Definition \ref{map_b}, we obtain the formula stated in the theorem.

In general, the equatorial symbol of $\tilde{P}$ is {\em homotopic} to the constant map from $S(H^*)$ to the identity operator on $F^0$, and so essentially the same argument applies.
\end{proof}

\begin{remark}
In Theorem \ref{general_formula} we did not explictly address the Morita equivalence that is part of the definition of the map $b$.
Strictly speaking, we only defined elements
\[ [\pi_\pm(\sigma_0^{-1}\sigma^\pm_H(P))] \in K_1(\KK(\HH^{BF}_\pm))\]
but we have not yet indicated what to do about the Morita equivalences
\[ K_1(\KK(\HH^{BF}_\pm))\cong K^1(X)\]
But this is a standard procedure in $K$-theory.
Let $V^N_\pm \to X$ denote the complex vector bundles on $X$ defined by
\[  V^N_+ = \bigoplus_{j=0}^N\;{\rm Sym}^j\,H^{1,0},\;\;V^N_- = \bigoplus_{j=0}^N\;{\rm Sym}^j\,H^{0,1},\]
and let
\[ S^N_\pm\;\colon\; V^{BF}_\pm \to V^N_\pm\]
be the family of orthogonal projections in each  fiber.
Let us denote by $\pi_\pm^N$ the Bargmann-Fok representations
compressed by the projections $S^N$,
\[ \pi^N_\pm(a) := S^N_\pm \pi_\pm(a) S^N_\pm\]
For sufficiently large value of the integer $N$ we have
\[ [\pi_\pm(\sigma_0^{-1}\sigma^\pm_H(P))] = [S^N\pi_\pm(\sigma_0^{-1}\sigma^\pm_H(P))S^N+ (1-S^N)]\in K_1(\KK(\HH^{BF}_\pm)),\]
where the correct value of $N$ depends on $\pi_\pm(\sigma_0^{-1}\sigma^\pm_H(P))$.
Observe that with this choice of $N$,
the elements $\pi_\pm^N(\sigma_0^{-1}\sigma^\pm_H(P))$ define automorphisms of the vector bundles $V^N_\pm$, so that
\[ [V^N, \pi^N_\pm(\sigma_0^{-1}\sigma^\pm_H(P))]\in K^1(X)\]
The compression
\[ K_1(\KK(\HH^{BF}_\pm))\to K^1(X)\;\colon\; [\pi_\pm(\sigma_0^{-1}\sigma^\pm_H(P))]\mapsto [V^N_\pm, \pi^N_\pm(\sigma_0^{-1}\sigma^\pm_H(P))]\]
implements the Morita equivalence (where the size of $N$ depends on $P$, as mentioned).
So we have, more precisely,
\[ [P]= [\pi^N_-(\sigma_0^{-1}\sigma^-_H(P))]\,\cap\,[X^-] \;+\; [\pi^N_+(\sigma_0^{-1}\sigma^+_H(P))]\,\cap\, [X^+] .\]
\end{remark}

\subsection{Toeplitz operators}\label{Toeplitz}

The index formula for Toeplitz operators of Louis Boutet de Monvel \cite{Bo79} is a special case  of Theorem \ref{general_formula}.
If the contact manifold $X$ is the boundary of a strictly pseudoconvex complex domain,
then the {\em Szeg\"o  projector} $S$ is defined as the projection of $L^2(X)$ onto the Hardy space $H^2(X)$.
Let $f$ be a smooth map  
\[ f\,\colon X\to {\rm GL}(r, \CC).\]
In an evident fashion, the function $f$ defines a multiplication operator $M_f$ on $L^2(X)^{\oplus r}$. 
The Toeplitz operator $T_f$ is the compression of  $M_f$ to $H^2(X)^{\oplus r}$,
i.e.,
\[ T_f = S_rM_f S_r,\]
where $S_r=S^{\oplus r}$.
The Toeplitz operator $T_f$ is a Fredholm operator, and Boutet de Monvel's formula is
\[ {\rm Index}\,T_f = \ang{{\rm ch}(f)\cup {\rm Td}(X), [X]},\]
where ${\rm Td}(X)$ is the Todd class of the Spin$^c$ manifold $X^+$,
and $[f]$ is viewed as an element in $K^1(X)$.

As shown in \cite{Ep04}, the operator
\[ \tilde{T}_f = T_f + 1-S_r\]
is an order zero operator in the Heisenberg calculus.
It is immediate that $\tilde{T}_f$ is a Fredholm operator  on $L^2(X)^{\oplus r}$ with ${\rm Index}\,\tilde{T}_f={\rm Index}\,T_f$.
To derive Boutet de Monvel's result from our general formula,
we use the calculation in \cite{Ep04} of the principal Heisenberg symbol of $\tilde{T}_f$.
Within the Bargmann-Fok space $\HH^{BF}_+$ there is the vacuum summand $\CC={\rm Sym}^0 \CC^n$. 
Hence, the Hilbert module $V_+^{BF}$ contains the trivial line bundle $\underline{\CC}={\rm Sym}^0H^{1,0}$.
The Bargmann-Fok representations $\pi_\pm$ of the Heisenberg symbol of $\tilde{T}_f$ are
\begin{align*}
\pi_+(\sigma_H(\tilde{T}_f)) &= f \;\text{\rm acting on}\; \underline{\CC}^{r},\\ 
\pi_+(\sigma_H(\tilde{T}_f)) &= 1 \;\text{\rm on the orthogonal complement of }\; \underline{\CC}^{r},\\ 
\pi_-(\sigma_H(\tilde{T}_f))&=1
\end{align*} 
The equatorial symbol of $\tilde{T}_f$ is $1$, and the formula of Theorem \ref{general_formula} 
\[ [T_f] = [\pi_+(\sigma^+_H(\tilde{T}_f))] \cap [X^+]
\,+\,[\pi_-(\sigma^-_H(\tilde{T}_f))] \cap [X^-] \]
reduces to
\[ [T_f] = [f]\cap [X^+]\]
Equivalently, the $K$-cycle which solves the index problem for $\tilde{T}_f$ is $(X^+\times S^1, E_f, \varphi)$ where $E_f$ is the vector bundle on $X\times S^1$ obtained from a clutching construction---using trivial vector bundles of fiber dimension $r$---with $f$, and $\varphi\,\colon X\times S^1\to X$ is the projection.  
In particular, the formula of Boutet de Monvel is now a corollary.

\subsection{Second order scalar operators}\label{final}

For Heisenberg-elliptic {\em differential} operators $P$, 
the principal Heisenberg symbol and its action on the Bargmann-Fok spaces 
can be explicitly computed by an algorithmic procedure.
Theorem \ref{general_formula} then gives an explicit $(M,E,\varphi)$-cycle that corresponds to  $[P]$.
We will illustrate this by computing the geometric $K$-cycle for the  Heisenberg-elliptic operators $P_\gamma = \Delta_H+i\gamma T$.
The procedure is essentially the same for all {\em differential} Heisenberg-elliptic operators,
but it is most easily illustrated by an explicit example.
\vskip 6pt
Because of Darboux's theorem the contact manifold $X$ can locally be identified with an open subset of the Heisenberg group $\RR^{2n+1}$.
Let $X_j, Y_j, T$ be the standard right invariant vector fields on the Heisenberg group with $[X_j, Y_j]=T$.
In such local coordinates, the operator $P_\gamma$ can be written as
\[ P_\gamma = \sum_{j=1}^n -(X_j^2+Y_j^2)+i\gamma T+\dots,\]
where we ignore first order terms in $X_j, Y_j$.
These lower order terms may appear because the sublaplacian $\Delta_H=\sum -(X_j^2+Y_j^2)+\dots$ is unique only up to lower order terms.

We have $JX_j=Y_j$, and so $Z_j=(X_j-iY_j)/\sqrt{2}$.
A simple computation shows that $2Z_j\bar{Z}_j=X_j^2+Y_j^2+i[X_j,Y_j]$, and so
\[ \sum -2Z_j\bar{Z}_j = \sum -(X_j^2+Y_j^2) +inT.\]
We can therefore write
\[ P_\gamma = \sum -2Z_j\bar{Z}_j -i(n-\gamma)T+\dots.\]
The Bargmann-Fok representation on $(\HH^{BF}_+)_x$ of the symbol $\sigma_H(P)=\{P_x,x\in X\}$ is
\[ \pi_+(P_x) = \sum 2z_j\frac{\partial}{\partial z_j} +n-\gamma(x).\]
The symmetric powers of $H^{1,0}$ are eigenspaces of $\pi_+(P_x)$.
To see this, observe that 
\[ \pi_+(P_x)z^\alpha = (2|\alpha|+n-\gamma(x))z^\alpha,\]
where $z^\alpha=z_1^{\alpha_1}\dots z_n^{\alpha_n}$ and $|\alpha|=\alpha_1+\dots+\alpha_n$.
Therefore on ${\rm Sym^j}H^{1,0}$ the operator $\pi_+(P_x)$ acts as the scalar 
\[ a_j(x) = 2j+n-\gamma(x)\]
In the dual Bargmann-Fok representation the roles of $Z$, $\bar{Z}$ are reversed.
So from
\[ \sum -2\bar{Z}_jZ_j = \sum -(X_j^2+Y_j^2) -inT,\]
we get
\[ P_\gamma = \sum -2\bar{Z}_jZ_j +i(n+\gamma)T.\]
The representation of $\sigma_H(P)$ on the conjugate Bargmann-Fok spaces $(\HH^{BF}_-)_x$ is then given by
\[ \pi_-(P_x) = \sum 2\bar{z}_j\frac{\partial}{\partial \bar{z}_j} +n+\gamma(x).\]
and we find that $\pi_-(P_x)$ acts on ${\rm Sym^j}H^{0,1}$
as the scalar 
\[ b_j= 2j+n+\gamma(x)\]
Replacing $P$ with 
\[\tilde{P} = P(1+P^*P)^{-1/2}\]
we obtain an order zero operator whose principal symbols
act on the symmetric powers of $H^{1,0}$ by the scalar $a_j/\sqrt{1+|a_j|^2}$,
and similarly for $H^{0,1}$.
For large values of $j$ these renormalized scalars are close to $1$,
and so they contribute only trivial summands to the $K$-cycle
and can be ignored.
We can use the non-normalized scalars, because as vector bundle automorphisms they are homotopic to their normalized versions.
The result is the following formula in $K$-homology.

\begin{proposition}
Let $P_\gamma = \Delta_\gamma + i\gamma T$ be a Heisenberg-elliptic operator.
Then the element in $K$-homology represented by $P_\gamma$ is
\[ \sum_{j} \left( [{\rm Sym}^j H^{1,0}] \cup [2j+n-\gamma]\right)\cap [X^+]\,+\, \sum_{j} \left([{\rm Sym}^j H^{0,1}]\cup [2j+n+\gamma]\right)\cap[X^-]\]
Here the vector bundles ${\rm Sym}^j H^{1,0}$ and ${\rm Sym}^j H^{0,1}$ represent elements in the $K$-theory group $K^0(X)$, while the functions $2j+n-\gamma$ and $2j+n+\gamma$ represent elements in  odd $K$-theory $K^1(X)$.
\end{proposition}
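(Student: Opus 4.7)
The plan is to apply Theorem \ref{general_formula} to $P=P_\gamma$, with vector bundles $F^0=F^1=\underline{\CC}^r$. Since the classical principal symbol of $P_\gamma$ is a scalar multiple of the identity on $\CC^r$ and the symbol restricted to $S(H^*)$ is (homotopic to) the identity, we may take $\sigma_0=I_r$. Thus Theorem \ref{general_formula} reduces to
\[ [P_\gamma] = [\pi_+(\sigma^+_H(P_\gamma))]\cap [X^+] + [\pi_-(\sigma^-_H(P_\gamma))]\cap [X^-],\]
and the task becomes to compute the two Bargmann-Fok representations explicitly and to recognize them as cup products.

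First I would compute the principal Heisenberg symbol of $P_\gamma$ locally, using Darboux coordinates in which the tangent Heisenberg group has right-invariant frame $X_j, Y_j, T$ with $[X_j,Y_j]=T$. Freezing coefficients at $x\in X$ and using the identity $2Z_j\bar Z_j = X_j^2+Y_j^2 + inT$ (with $Z_j=(X_j-iY_j)/\sqrt 2$) rewrites the symbol as $\sum -2 Z_j\bar Z_j -i(n-\gamma(x))T$, while the reversed ordering gives $\sum -2 \bar Z_j Z_j +i(n+\gamma(x))T$. Substituting the Bargmann-Fok formulas $\pi_+(Z_j)=iz_j$, $\pi_+(\bar Z_j)=i\partial/\partial z_j$, $\pi_+(T)=i$ (and the conjugate assignments for $\pi_-$) yields
\[ \pi_+(\sigma^+_H(P_\gamma))_x = 2\sum_j z_j \frac{\partial}{\partial z_j} + n - \gamma(x),\qquad \pi_-(\sigma^-_H(P_\gamma))_x = 2\sum_j \bar z_j \frac{\partial}{\partial \bar z_j} + n + \gamma(x).\]
The Euler-type operator $\sum z_j\partial_{z_j}$ has the symmetric powers $\mathrm{Sym}^j\CC^n\subset \HH^{BF}_+$ as eigenspaces with eigenvalue $j$, and passing to associated bundles over $X$ gives the decompositions $V^{BF}_\pm = \widehat\bigoplus_j \mathrm{Sym}^j H^{1,0}$ (resp.\ $H^{0,1}$) on which the symbol acts diagonally by the scalar functions $a_j(x) = 2j+n-\gamma(x)$ and $b_j(x) = 2j+n+\gamma(x)$ respectively.

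Next I would pass to the order zero normalization $\tilde P = P_\gamma(1+P_\gamma^*P_\gamma)^{-1/2}$ as in the proof of Theorem \ref{general_formula}. The renormalized scalars $a_j/\sqrt{1+|a_j|^2}$ tend to $1$ as $j\to\infty$, so for $j$ sufficiently large they contribute trivially to $K^1(X)$; the finite truncation described in the remark after Theorem \ref{general_formula} then gives honest automorphisms of the finite-rank bundles $V^N_\pm \otimes \CC^r$, and the full $K$-theory class is the direct sum over $j$ of the truncated pieces. Moreover, each diagonal block is the external tensor product of the identity on the bundle $\mathrm{Sym}^j H^{1,0}$ with the $GL(r,\CC)$-valued function $x\mapsto (2j+n)I_r-\gamma(x)$ on $X$. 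Under the natural external product $K^0(X)\otimes K^1(X)\to K^1(X)$, this is precisely $[\mathrm{Sym}^j H^{1,0}]\cup [2j+n-\gamma]$, and likewise for the $\pi_-$ side.

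Assembling these ingredients and substituting into the formula from Theorem \ref{general_formula} yields the stated identity. The main subtlety, rather than any hard computation, is the bookkeeping at the finite/infinite-rank interface: one must check that the homotopy from the normalized scalar $a_j/\sqrt{1+|a_j|^2}$ to $a_j$ is through invertible automorphisms (which holds since $P_\gamma$ is Heisenberg-elliptic, so none of the $a_j(x)$ vanish, by Proposition \ref{gamma}), and that only finitely many summands contribute nontrivially. Both points follow from the compactness of $X$ and the boundedness of $\gamma$, so no essential difficulty arises beyond the symbol computation already sketched.
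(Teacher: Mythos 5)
Your proposal is correct and follows essentially the same route as the paper: compute the principal Heisenberg symbol in Darboux coordinates via the identity relating $Z_j\bar{Z}_j$ to $X_j^2+Y_j^2$ and $T$, read off the diagonal action on $\mathrm{Sym}^j H^{1,0}$ and $\mathrm{Sym}^j H^{0,1}$ with scalars $2j+n\mp\gamma$, normalize to order zero, and use the homotopy and finite truncation to land in $K^1(X)$ before applying Theorem \ref{general_formula}. The only quibble is a harmless typo in the single-index identity (it should be $2Z_j\bar Z_j = X_j^2+Y_j^2+iT$, with the $n$ appearing only after summing over $j$), which you correct implicitly in the next line.
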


\begin{corollary}\label{Chern-char}
The Chern character of the $K$-cycle $[P_\gamma]$ is the Poincar\'e dual of the cohomology class
\begin{align*}
{\rm P.D.}\, {\rm ch}\,([P_\gamma])=&\sum_{j=0}^N \;{\rm ch}(2j+n-\gamma)\cup {\rm ch}({\rm Sym^j}\, H^{1,0})\cup e^{c_1(H^{1,0})/2}\cup \hat{A}(X)\\
&+ (-1)^{n+1}\sum_{j=0}^N\;{\rm ch}(2j+n+\gamma)\cup {\rm ch}({\rm Sym^j}\, H^{0,1})\cup e^{c_1(H^{0,1})/2}\cup \hat{A}(X).
\end{align*}
Here the odd Chern character ${\rm ch}(f)\in H^1(X,\ZZ)=[X,S^1]$
refers to the $1$-cocycle associated to a continuous map $f\,\colon X\to \CC^\times \sim S^1$.
\end{corollary}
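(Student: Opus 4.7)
My plan is to apply the Chern character to the $K$-homology decomposition of $[P_\gamma]$ established in the preceding proposition, and to convert each cap product into its cohomological Poincar\'e dual via the Spin$^c$ Todd identity. Concretely, for any class $\xi\in K^{\bullet}(X)$ and the Spin$^c$ fundamental $K$-class $[X^+]\in K_1(X)$, the standard formula for the homology Chern character of a cap product reads
\[ \mathrm{ch}(\xi\cap [X^+]) \;=\; \bigl(\mathrm{ch}(\xi)\cup \mathrm{Td}(H^{1,0})\bigr)\cap [X], \]
with $H^{0,1}$ replacing $H^{1,0}$ for the conjugate Spin$^c$ fundamental class $[X^-]$. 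The stable identification $TX\oplus \underline{\RR}\cong H^{1,0}\oplus \underline{\CC}$ given by the Reeb trivialization of the normal line yields $\mathrm{Td}(H^{1,0})=e^{c_1(H^{1,0})/2}\,\hat A(X)$, and symmetrically $\mathrm{Td}(H^{0,1})=e^{c_1(H^{0,1})/2}\,\hat A(X)$.

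Next I would expand each summand by multiplicativity of the Chern character on the cup product of an even class with an odd class,
\[ \mathrm{ch}\bigl([\mathrm{Sym}^j H^{1,0}]\cup [2j+n-\gamma]\bigr)=\mathrm{ch}(\mathrm{Sym}^j H^{1,0})\cup \mathrm{ch}(2j+n-\gamma), \]
where the second factor is the odd Chern character in $H^{\mathrm{odd}}(X;\QQ)$ of the map $2j+n-\gamma\colon X\to \CC^\times$. Summing over $j=0,\dots,N$ and combining with the Todd identity above produces precisely the first line of the claimed formula, in which the implicit Poincar\'e duality is that of $X$ with its fixed orientation $\theta(d\theta)^n$. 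The symmetric computation, using $\mathrm{Sym}^j H^{0,1}$, the odd Chern character of $2j+n+\gamma$, and $\mathrm{Td}(H^{0,1})$, yields the second line, apart from the global sign.

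The main obstacle is precisely this sign $(-1)^{n+1}$. It arises because the Spin$^c$ structure on $X^-$ is obtained from that on $X^+$ by reversing the co-orientation, equivalently by conjugating the almost complex structure on the stabilization $H^{1,0}\oplus \underline{\CC}$ of complex dimension $n+1$. Conjugation of a complex structure of complex dimension $k$ multiplies the induced real orientation by $(-1)^k$, so the Poincar\'e duality convention associated to $[X^-]$ differs from the one associated to $[X^+]$ by $(-1)^{n+1}$ when both are expressed against the fixed orientation of $X$. I expect the bookkeeping of this sign---together with the check that the two clutching $S^1$'s used in the construction of the $K$-cycle do not introduce further signs---to be the only delicate step; once it is settled, the formula of the corollary follows immediately.
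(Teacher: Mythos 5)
Your proposal is correct and takes essentially the same route the paper implicitly uses: apply the homology Chern character to the $K$-homology decomposition of $[P_\gamma]$ from the preceding proposition, convert each term $\xi\cap[X^\pm]$ via $\mathrm{ch}(\xi)\cup\mathrm{Td}(X^\pm)\cap[X]$, rewrite $\mathrm{Td}(H^{1,0})$ and $\mathrm{Td}(H^{0,1})$ as $e^{c_1/2}\hat A(X)$ using stability of $\hat A$ under $TX\oplus\underline{\RR}\cong H^{1,0}\oplus\underline{\CC}$, and account for the $(-1)^{n+1}$ by the orientation reversal that conjugation of an $(n+1)$-dimensional complex structure produces when both integrals are taken against the fixed volume form $\theta(d\theta)^n$. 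This matches the paper's earlier remark explaining the sign in Corollary \ref{thmchar}.
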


\begin{example}
In \cite{vE10c} we derived an explicit topological formula for the Fredholm index of $P_\gamma$ in the simplest possible case where $X$ is a three manifold.
We now see that this index formula, while correct, was incomplete.
Let us calculate the $K$-cycle, and compare it with the index formula in \cite{vE10c}.

On a three manifold $X$ we have $\hat{A}(X)=1$, while the bundle $H^{1,0}$ is a line bundle. Then ${\rm Sym^j}H^{1,0}=(H^{1,0})^{\otimes j}$.
Writing $c_1=c_1(H^{1,0})$ we have
\[ {\rm ch}({\rm Sym^j} H^{1,0})e^{c_1/2}
= (1+j c_1)(1+\frac{1}{2} c_1)
= 1+\frac{2j+1}{2}c_1.\]
Denote by 
\[ W_k ={\rm ch}(\gamma - k)=\left[-\frac{1}{2\pi i}\frac{d\gamma}{\gamma - k}\right] \in H^1(X,\ZZ)\]
the $1$-cocycle that encodes the winding of the coefficient $\gamma$ around the odd integer $k$,
\[ \gamma\;\colon\; X\to \CC\setminus \{\text{\rm odd integers}\}.\]
We find
\[ {\rm P.D.}\,{\rm ch}(P_\gamma) = \sum_{k \,\text{\rm odd}}\, W_k \;+\; \sum_{k \,\text{\rm odd}}\, k\,W_k\cup \frac{c_1}{2}\;\in H^1(X,\ZZ)\oplus H^3(X,\ZZ).\] 
Here all cocycles are integer cocycles.
The Poincar\'e dual of the $3$-cocycle 
$\sum k\,W_k\cup c_1/2$ is a $0$-cycle whose image under the map $H_0(X,\ZZ)\to H_0({\rm pt},\ZZ)=\ZZ$ is just the Fredholm index of $P_\gamma$.
This term in our $K$-cycle contains exactly the same information as the index formula of \cite{vE10c}.
The Poincar\'e dual of the term $\sum W_k$ is a $2$-cycle.
If we twist $P_\gamma$ by a complex vector bundle $F\to X$,
then this $2$-cycle will pair with $c_1(F)$, and the curvature of $F$ will contribute to the index of $F\otimes P_\gamma$.
The information contained in the term $\sum W_k$ cannot be derived or guessed from the formulas of \citelist{\cite{vE10c} \cite{vE10b}}.
The following formula highlights the gap between the result of the present paper and the earlier formula,
\[ \mathrm{Index}(F\otimes P_\gamma) = \mathrm{rank}(F)\cdot \mathrm{Index}\mathrm{P_\gamma} + \sum_{k \,\text{\rm odd}} \,\int_X \mathrm{ch}(\gamma-k)\wedge c_1(F).\]

\end{example}

\begin{example}\label{example}
The formula for the $K$-cycle of $P_\gamma$  easily extends to the case where $P_\gamma$ acts on sections in a trivial vector bundle.
With $X$ as above, let $r$ be a positive integer and let $\gamma$ be a $C^{\infty}$ map from $X$ to the $\CC$ vector space
of all $r\times r$ matrices, denoted  $M(r, \CC)$,   
\[
\gamma\; \colon \; X \to M(r, \CC)
\]
Then  --- entirely analogous to the case $r = 1$ ---  a differential operator $P_{\gamma}$ is given by :
\begin{align*}
P_\gamma &= \Delta_H \otimes I_r + iT \otimes \gamma \\
P_\gamma & \;\colon\; C^\infty(X,\CC^r)\to C^\infty(X,\CC^r)
\end{align*}
where (as usual) $I_r$ is the $r\times r$ identity matrix.   
$P_\gamma$ is elliptic in the Heisenberg calculus if and only if for all $x\in X$:
\[
\gamma(x)-\lambda I_r\quad \text{\rm is invertible for all} \;\lambda \in\{\dots, -n-4, -n-2, -n, n, n+2, n+4, \dots\}    
\]
As above, $P_\gamma$ determines an element $[P_\gamma]$ in $KK^0(C(X),\CC)$.
The $K$-cycle which solves the index problem for $[P_\gamma]$ is $(X^+\times S^1, E^+, \varphi)\sqcup(X^-\times S^1, E^-, \varphi)$
where $X^+\times S^1\sqcup X^-\times S^1$ and $\varphi$ are as above,
and 
\begin{align*}
E^+&= \bigoplus_{j=0}^N \left( \nu^\gamma_{2j+1}\otimes \varphi^*\mathrm{Sym}^j H^{1,0}\right) \\
E^-&= \bigoplus_{j=0}^N \left( \nu^\gamma_{-(2j+1)}\otimes \varphi^*\mathrm{Sym}^j H^{0,1} \right)
\end{align*}
Here $\nu^\gamma_{2j+1}$ (resp. $\nu^\gamma_{-(2j+1)}$) is the $\CC$ vector bundle of fiber dimension $k$ on $X\times S^1$
obtained by doing a clutching construction using $\gamma - (2j+1)I_k$ (resp. $\gamma +(2j+1)I_k$).
\end{example}


\bibliographystyle{amsplain}

\bibliography{MyBibfile}

\newpage

\end{document}